\theoremstyle{plain}
\newtheorem{thm}{Theorem}[section]
\newtheorem{mth}{Theorem}
\newtheorem{cor}[thm]{Corollary}
\newtheorem{lem}[thm]{Lemma}
\theoremstyle{remark}
\newtheorem*{rem}{Remark}
\newtheorem*{ex}{Example}
\numberwithin{equation}{section}
\newcommand{\con}{\alpha}
\newcommand{\num}{\gamma}
\newcommand{\para}{\nu}
\newcommand{\loc}{\operatorname{loc}}
\newcommand{\bfR}{\mathbf{R}}
\newcommand{\calH}{{\mathscr H}}
\newcommand{\calP}{{\mathscr P}}
\newcommand{\bfL}{{\mathbf L}}
\newcommand{\bfk}{{\mathbf k}}
\newcommand{\Int}{\operatorname{Int}}
\newcommand{\C}{{\mathbb C}}
\newcommand{\const}{\mathrm{const.}}
\newcommand{\eps}{{\varepsilon}}
\newcommand{\re}{\operatorname{Re}}
\newcommand{\im}{\operatorname{Im}}
\newcommand{\Poly}{\operatorname{Pol}}
\renewcommand{\d}{{\partial}}
\newcommand{\dbar}{\bar{\partial}}
\newcommand{\supp}{\operatorname{supp}}
\newcommand{\Lap}{\Delta}
\newcommand{\lnorm}{\left\|}
\newcommand{\rnorm}{\right\|}
\def\norm#1{\lnorm {#1} \rnorm}
\def\labs{\left |}
\def\rabs{\right |}
\def\babs#1{\labs {#1} \rabs}
\begin{document}

\title{Microscopic densities and Fock-Sobolev spaces}

\subjclass[2010]{30H20; 60B20}

\begin{abstract} We study two-dimensional eigenvalue ensembles close to certain types of singular points
in the interior of the droplet. We prove existence of a microscopic density which quickly approaches the equilibrium density, as the distance from the singularity increases beyond the microscopic scale.
This kind of asymptotic is used to analyze normal matrix models in \cite{AKS}.
 In addition, we obtain here asymptotics for the Bergman function of certain Fock-Sobolev spaces of entire functions. \end{abstract}

\author{Yacin Ameur}

\address{Yacin Ameur\\
Department of Mathematics\\
Faculty of Science\\
Lund University\\
P.O. BOX 118\\
221 00 Lund\\
Sweden}

\email{Yacin.Ameur@maths.lth.se}

\author{Seong-Mi Seo}

\address{Seong-Mi Seo\\ School of Mathematics\\ Korea Institute for Advanced Study\\
85 Hoegiro\\ Dongdaemun-gu\\ Seoul 02455\\ Republic of Korea}

\email{seongmi@kias.re.kr}

\keywords{Microscopic density; Fock-Sobolev space; Bergman function}

\thanks{Seo was supported by Samsung Science and Technology Foundation, SSTF-BA1401-01.}

\maketitle

\section{Introduction and main results}

\subsection{Microscopic potentials} \label{micropot}
Consider a real-valued polynomial
$Q_0(z)=Q_0(z,\bar{z})$,
positively homogeneous of some even degree $2k$, where $k\ge 1$. We assume that $Q_0$ be positive definite, i.e., $Q_0(z)>0$ when $z\ne 0$. With a minor restriction, we
will also assume that
\begin{equation}\label{norma}\d^{2k}Q_0(0)=0.\end{equation}

Finally, we fix a real parameter $c>-1$ and put
\begin{equation}\label{vdef}V_0(z)=Q_0(z)-2c\log|z|.\end{equation}
We call $V_0$ a \textit{microscopic potential.}

Here and throughout, we write
$\d=(\d_x-i\d_y)/2$ and $\dbar=(\d_x+i\d_y)/2$. By $\Lap:=\d\dbar$ we denote $1/4$ times the standard Laplacian on $\C$. We write $dA=dxdy/\pi$ for Lebesgue measure divided by $\pi$.

\subsection{The Bergman function} Consider the measure $d\mu_0=e^{-V_0}\, dA$, and
let $L^2_a(\mu_0)$ be the Bergman space of all entire functions $u$ such that
\begin{equation}\label{fsh}\|u\|_{L^2(\mu_0)}^2:=\int_\C\babs{u}^2\, d\mu_0=\int_\C\babs{u(z)}^2|z|^{2c}e^{-Q_0(z)}\, dA(z)<\infty.
\end{equation}
We will write $L_0(z,w)$ for the Bergman kernel in $L^2_a(\mu_0)$. The main object of interest for the present investigation is the
\textit{Bergman function} of $L^2_a(\mu_0)$,
$$R_0(z)=L_0(z,z)e^{-V_0(z)}.$$

The
space $L^2_a(\mu_0)$ is a kind of Fock-Sobolev space of entire functions, associated with the generalized Fock-weight $Q_0$ and the Sobolev parameter $c$.
Incidentally, related
spaces were introduced recently
in the papers \cite{CCK,CZ}.

Our first result is the following.

\begin{mth} \label{MT1} There exists a constant $\con=\con[Q_0]>0$ such that
\begin{equation}\label{mil}R_0(z)=\Lap Q_0(z)\cdot (1+O(e^{-\con|z|^{2k}})),\qquad \text{as}\quad  |z|\to\infty.\end{equation}
\end{mth}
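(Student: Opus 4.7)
The plan is to establish matching upper and lower bounds on
\[
R_0(z_0) \;=\; \sup\bigl\{|f(z_0)|^2 e^{-V_0(z_0)} : f\in L^2_a(\mu_0),\ \|f\|_{L^2(\mu_0)} \le 1\bigr\}
\]
for $|z_0|\to\infty$, exploiting the key fact that $V_0$ is smooth and strictly subharmonic on $\dot{\C}$ with $\Delta V_0=\Delta Q_0$ (since $\log|z|$ is harmonic away from $0$). The expected exponentially small error in \eqref{mil} reflects the very rapid decay of the weight $e^{-Q_0}$ away from $z_0$: on the microscopic scale $\asymp |z_0|^{-(k-1)}$ around $z_0$, the only nontrivial effect is the local Gaussian governed by $\Delta Q_0(z_0)$, while the origin---at macroscopic distance $|z_0|$ from $z_0$---contributes only a correction of order $e^{-\con|z_0|^{2k}}$.

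For the \emph{lower bound}, I would construct a near-extremal entire function via H\"ormander's $L^2$-estimate for $\dbar$. Pick a holomorphic polynomial $\phi_{z_0}(w)$ so that $2\re\phi_{z_0}$ matches the non-quadratic part of the Taylor expansion of $V_0$ at $z_0$; then the coherent state $F_{z_0}(w)=e^{\phi_{z_0}(w)}$ is tailored so that $|F_{z_0}|^2 e^{-V_0}$ is a Gaussian peak at $z_0$ of height $\asymp\Delta Q_0(z_0)$ and width $\asymp\Delta Q_0(z_0)^{-1/2}$. Multiplying by a cutoff $\chi$ supported in $B(z_0,r_0)$ slightly above the microscopic scale, solve $\dbar u=\dbar(\chi F_{z_0})$ by H\"ormander with weight $V_0$, and set $f=\chi F_{z_0}-u$. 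The datum $\dbar(\chi F_{z_0})$ is supported in an annulus where $V_0$ has already increased above $V_0(z_0)$ by an amount $\gtrsim|z_0|^{2k}$, forcing $\|u\|_{L^2(\mu_0)}^2$ and hence $|u(z_0)|^2 e^{-V_0(z_0)}$ to be of order $e^{-\con|z_0|^{2k}}$; after normalization this yields $R_0(z_0)\ge\Delta Q_0(z_0)(1-O(e^{-\con|z_0|^{2k}}))$. The \emph{upper bound} is dual: for $f$ of unit norm, $g=fe^{-\phi_{z_0}}$ satisfies a Gaussian sub-mean-value inequality on $B(z_0,r_0)$ giving $|g(z_0)|^2\le\Delta Q_0(z_0)(1+o(1))\|f\|_{L^2(\mu_0)}^2$; sharpening $o(1)$ to $O(e^{-\con|z_0|^{2k}})$ comes by decomposing $f$ into its component along the coherent state and an orthogonal remainder, the latter being annihilated by the extremal property up to exponentially small error.

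The main obstacle is the origin: H\"ormander's theorem requires a plurisubharmonic weight, but $\Delta V_0$ carries a signed Dirac mass at $0$ proportional to $c$. I would circumvent this by replacing $V_0$ with a modified weight $\widetilde V_0$ that agrees with $V_0$ outside a fixed neighborhood $U$ of $0$ and is smooth and strictly subharmonic throughout $U$. The constructions above are supported near $z_0$, which lies far from $U$, so $\chi F_{z_0}$ and $\dbar(\chi F_{z_0})$ are unchanged, and the H\"ormander estimate for $\widetilde V_0$ transfers to one for $V_0$ at the cost of a bounded factor $|z|^{-2c} e^{\widetilde V_0 - Q_0}$ supported on $U$, which is swamped by the exponential gain $e^{-\con|z_0|^{2k}}$. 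A parallel issue for the upper bound is that the log contribution $2c\log|z|$ to the Taylor expansion of $V_0$ at $z_0$ is real-harmonic rather than plurisubharmonic, and it must be absorbed into $\phi_{z_0}$ via the local holomorphic branch of $\log$ on $B(z_0, r_0)\subset\dot{\C}$ so that $g$ remains holomorphic.
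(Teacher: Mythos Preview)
Your sketch follows a natural direct line---coherent state for the lower bound, sub-mean-value for the upper bound---but as written it does \emph{not} deliver the exponential error in \eqref{mil}; it gives only a polynomial one. The issue is the same in both directions. For the lower bound, your coherent state $F_{z_0}=e^{\phi_{z_0}}$ is designed so that $|F_{z_0}(w)|^2e^{-V_0(w)}$ equals $e^{-\Lap Q_0(z_0)|w-z_0|^2}$ only to leading order: the Taylor expansion of $Q_0$ at $z_0$ has non-harmonic terms of all orders $j+l\ge 2$, with coefficients $\asymp|z_0|^{2k-j-l}$. On the microscopic scale $|w-z_0|\sim|z_0|^{1-k}$ these give corrections of size $|z_0|^{-k(j+l-2)}$ in the exponent, so $\|\chi F_{z_0}\|^2_{L^2(\mu_0)}=\Lap Q_0(z_0)^{-1}(1+O(|z_0|^{-k}))$, and after normalization you obtain $R_0(z_0)\ge\Lap Q_0(z_0)(1+O(|z_0|^{-k}))$---polynomial, not exponential. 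The H\"ormander correction $u$ is indeed $O(e^{-\alpha|z_0|^{2k}})$ once you cut off at radius $m|z_0|$, but that gain is swamped by the Laplace-integral error already present in the norm of the principal term. For the upper bound, the sub-mean-value argument has the same defect (the constant you extract is $\sup_{B(z_0,r_0)}\Lap Q_0$, off from $\Lap Q_0(z_0)$ by a power of $|z_0|$), and your proposed fix---decomposing $f$ along the coherent state plus an orthogonal remainder $h$---is circular: there is no mechanism forcing $|h(z_0)|^2e^{-V_0(z_0)}$ to be exponentially small for general $h\perp F_{z_0}$; indeed that statement is equivalent to what you are trying to prove.

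The paper's proof is genuinely different and sidesteps this obstruction. It does not attack $R_0$ directly; instead it embeds $V_0$ in the $n$-dependent family $V_n(\zeta)=Q_0(\zeta)+\tfrac{2c}{n}\log|\zeta|$, shows that the rescaled one-point functions $R_n$ converge to $R_0$ (Theorem~\ref{final}), and then proves the exponential asymptotic for any microscopic density (Theorem~\ref{MT2}). The point is that in the $n$-dependent approximate-kernel estimate (Lemma~\ref{mack}) the analogue of your polynomial error term acquires an extra factor $n^{-1/2}$, which after rescaling becomes $r_n^2\to 0$; in the limit only the exponential error survives. The paper itself notes (Remark following Lemma~\ref{uv}) that the direct version of the argument, which is essentially equivalent to your coherent-state construction, yields only $R_0(z)=\Lap Q_0(z)(1+O(|z|^{1-k}))$. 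To make a direct proof give exponential error you would need the full local holomorphic-kernel expansion for real-analytic weights (in the spirit of Boutet de Monvel--Sj\"ostrand), not a single Gaussian peak.
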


\begin{ex} Let $\lambda$ be a positive parameter and consider the "Mittag-Leffler potential''
$$V_0=|z|^{2\lambda}-2c\log|z|,$$
we have explicitly $R_0(z)=E(|z|^2)e^{-V_0(z)}$ where $E$ is given by
\begin{equation}\label{opf}E(z)=\lambda\cdot E_{1/\lambda,(1+c)/\lambda}(z).\end{equation}
Here $E_{a,b}$ is the Mittag-Leffler function (cf. \cite{GKMR})
$$E_{a,b}(z)=\sum_{j=0}^\infty \frac {z^j}{\Gamma\left(a j +b\right)}.$$
The formula \eqref{opf} can be verified by direct calculation of the orthonormal polynomials $e_j$ with respect to the measure $\mu_0$,
since $L_0(z,w)=\sum_0^\infty e_j(z)\bar{e}_j(w)$.

The asymptotic estimate for Mittag-Leffler functions in \cite[eq.(4.7.4),\,p.75]{GKMR} shows that, as $z\to\infty$, (if $\lambda>1/2$)
$$R_0(z)=
\lambda^2|z|^{2(\lambda-1)}+O(|z|^{-2-2c}e^{-|z|^{2\lambda}}).$$
Hence if we take $\lambda$ to be an integer $k$, then \eqref{mil} holds with the sharp error term
$$R_0(z)=\Lap Q_0(z)\cdot(1+O(|z|^{2(-c-k)}e^{-|z|^{2k}})).$$ An asymptotic expansion for the $O$-term is found in \cite{AKS}, cf. \cite{GKMR}.
\end{ex}

\begin{rem} Temporarily drop the assumption \eqref{norma} and write $$\kappa=\frac {\d^{2k}Q_0(0)}{(2k)!}.$$
Theorem \ref{MT1} still holds when $\kappa\ne 0$, provided that the other assumptions on $Q_0$ are satisfied.

Indeed if $\tilde{Q}_0=Q_0-2\re(\kappa z^{2k})$ then $\d^{2k}\tilde{Q}_0(0)=0$, so we can apply Theorem \ref{MT1} to conclude that the Bergman function corresponding to $\tilde{V}_0:=\tilde{Q}_0-2c\log|z|$ satisfies
$\tilde{R}_0(z)=\Lap Q_0(z)\cdot (1+O(e^{-\alpha|z|^2}))$ as $z\to\infty$. Then, using the isometric isomorphism
$$L^2_a(\tilde{\mu}_0)\to L^2_a(\mu_0)\quad ,\quad u(z)\mapsto u(z)\cdot e^{\kappa z^{2k}},$$
 one deduces readily that the Bergman kernels pertaining to $V_0$ and $\tilde{V}_0$ are related via
$L_0(z,w)=\tilde{L}_0(z,w)e^{\kappa z^{2k}+\bar{\kappa}\bar{w}^{2k}}$. Passing to Bergman functions, we conclude that
$R_0(z)=\Lap Q_0(z)\cdot (1+O(e^{-\alpha|z|^2}))$ as $z\to\infty$, as desired.
\end{rem}

\subsection{Microscopic densities} \label{mde} Let $Q$ be a suitable real-valued "potential function'', of sufficient growth near $\infty$. (Precisely, $\liminf_{\zeta\to\infty}\frac {Q(\zeta)}{\log|\zeta|}>2$.)

We associate with $Q$ the \textit{equilibrium measure} $\sigma$, which is the unique compactly supported Borel probability measure which minimizes the weighted logarithmic energy $$I_Q[\mu]=\iint_{\C^2}\log\frac 1 {|\zeta-\eta|}\,d\mu(\zeta)d\mu(\eta)+\int_\C Q\, d\mu.$$
It is well-known \cite{ST} that $\sigma$ takes the form
\begin{equation}\label{eqm}d\sigma=\Lap Q\cdot\chi_S\cdot dA,\end{equation} where the support $S=\supp\sigma$ is a compact set which one calls the \textit{droplet} in external field $Q$.

We assume in the following that $0$ is in the interior of $S$. We shall also assume that $Q$ is real-analytic
in a neighbourhood of $0$ and that the Taylor expansion of $\Lap Q$ about $0$ takes the form
$$\Lap Q=P+\text{"higher\,order\, terms''},$$
where $P$ is positive definite and homogeneous of degree $2k-2$.
Following \cite{AS}, we write $\tilde{Q}_0$ for the Taylor polynomial of degree $2k$ of $Q$ about $0$ and
put $Q_1=Q-\tilde{Q}_0$. We then introduce functions $H$ and $Q_0$ by
$$H(\zeta):=Q(0)+2\d Q(0)\zeta+\cdots+\frac 2 {(2k)!}\d^{2k}{Q}(0)\zeta^{2k},\quad Q_0:=\tilde{Q}_0-H.$$
Then we have \begin{equation}\label{candec}Q=Q_0+\re H+Q_1,\end{equation} where $Q_0$ is homogeneous of degree $2k$ and $Q_1=O(|\zeta|^{2k+1})$ as $\zeta\to0$. We refer to \eqref{candec} as the \textit{canonical decomposition} of the potential $Q$ about $0$.

It is useful to allow further generality, by adding to the potential
a term $h(\zeta)/n$ where $h$ might have finitely many logarithmic singularities at distinct points $a_1,\cdots,a_s$ in the punctured plane
$\C^*:=\C\setminus\{0\}$,
\begin{equation}\label{distinct}h(\zeta)=h_0(\zeta)+\sum_{j=1}^{s} 2c_j \log |\zeta-a_j|,\quad c_j >-1.\end{equation}
The function $h_0$ is any fixed $C^{1,1}$-smooth, real-valued function on $\C$ which satisfies (almost everywhere) estimates of the form
\begin{equation}\label{slow}h_0(\zeta)\le C[1+\log(1+|\zeta|^2)],\quad \Lap h_0(\zeta)\le C(1+|\zeta|^2)^{-2}.
\end{equation}

We finally fix $c>-1$ and form the $n$-dependent potential
\begin{equation}\label{gnoll}V_n=Q-2(c/n) \ell-(1/n)h,\qquad (\ell(\zeta):=\log|\zeta|).\end{equation}
Adding an $n$-dependent constant to $Q$ does not lead to any essential changes, so we assume in the
following that
$$Q(0)=H(0)=h(0)=0.$$

It is here convenient to recall a few notions from the theory of two-dimensional eigenvalue ensembles. (The reader who is not familiar with this theory is insured that our main results as well as our arguments can be understood without it.)

Consider a system ("eigenvalue ensemble'' or "system of point charges'') $(\zeta_j)_1^{n}\in\C^n$ picked randomly with respect to the Boltzmann-Gibbs law,
\begin{equation*}
d\mathbf{P}_n(\zeta_1,\cdots,\zeta_n)=\frac{1}{Z_n} e^{-H_n(\zeta_1,\cdots,\zeta_n) }\, dA^{\otimes n}(\zeta_1,\ldots,\zeta_n).
\end{equation*}
Here $Z_n$ is a suitable constant and $H_n$, the energy of the system, is
\begin{equation*}
H_n(\zeta_1,\cdots,\zeta_n)= \sum_{j\ne k}\log \frac{1}{\babs{\zeta_j-\zeta_k}}+n\sum_{j=1}^{n}V_n(\zeta_j).
\end{equation*}

Given a point $\eta\in\C$ and a number $\epsilon>0$, we let $N_\epsilon(\eta)$ denote the number of $\zeta_j$ which fall within distance $\epsilon$ from $\eta$ and define the one-point intensity function (in external potential $V_n$) by
\begin{equation*}
\mathbf{R}_n(\eta) = \lim_{\epsilon\to 0}\frac{\mathbf{E}_n(N_\epsilon(\eta))}{\epsilon^2},
\end{equation*}
where $\mathbf{E}_n$ is expectation with respect to $\mathbf{P}_n$.

Write $d\mu_n=e^{-nV_n}\, dA$ and denote by $\calP_n$ be the subspace of $L^2_a(\mu_n)$ consisting of polynomials of degree at most $n-1$. We will denote by $\bfk_n$ the reproducing kernel for the space $\calP_n$. By a
well-known calculation, given (for particle systems on a line) in \cite[Section IV.7.2]{ST}, we have
$$\bfR_n(\zeta)=\bfk_n(\zeta,\zeta)e^{-nV_n(\zeta)}.$$

Let $\sigma_n$ be the measure $d\sigma_n=\Lap V_n\cdot\chi_S\, dA$.
We define the \textit{microscopic scale}
$r_n$ at $0$ to be the radius so that
$$n\sigma_n(D(0,r_n))=1.$$
(Here and henceforth, $D(p,r)$ is the open disc with center $p$ and radius $r$.)

Since $\Lap Q=(1+O(n^{-1/2k}))\Lap Q_0$ on $D(0,r_n)$, we find that
$$r_n=\tau_0(1+c)^{1/2k}n^{-1/2k}(1+O(n^{-1/2k})),\quad (n\to\infty),$$
where $\tau_0$ is the
"modulus" at the point $0$, i.e., the positive number such that
$$\tau_0^{-2k}=\frac 1 {2\pi k}\int_0^{2\pi}\Lap Q_0(e^{i\theta})\, d\theta.$$
Multiplying $Q$ by a suitable constant, we can assume that
\begin{equation}\label{modo1}nr_n^{2k}=1.\end{equation}
We shall assume \textit{throughout} that \eqref{modo1} holds.

\begin{rem} In the limit as $n\to\infty$, the normalization \eqref{modo1} corresponds to
the condition $\int_{|z|\le 1}\Lap V_0=1$, i.e.,
\begin{equation}\label{modo}
\frac {\Lap^k Q_0(0)}{k[(k-1)!]^2}=
1+c.\end{equation}
\end{rem}

We now rescale about the point $0$ by
\begin{equation}\label{scaling}z=r_n^{-1}\zeta.\end{equation}
Write $R_n(z)$ for the \textit{rescaled one-point function}
$$R_n(z)=r_n^2\bfR_n(\zeta).$$
The family $\{R_n\}$ has a useful compactness property.

\begin{mth}\label{MT1.5} Each subsequence of $\{R_n\}$ has a further subsequence which converges in $L^1_{\operatorname{loc}}(\C)$ and
 locally uniformly on $\C^*$ to a function $R$.
\end{mth}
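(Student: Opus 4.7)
I would follow the standard three-step compactness program for rescaled Bergman densities: establish (i)~a uniform pointwise bound on compacts of~$\dot{\C}$, (ii)~a uniform $L^1_{\mathrm{loc}}$-bound on~$\C$ covering the origin, and (iii)~uniform equicontinuity on compacts of~$\dot{\C}$. Given these, Arzel\`a--Ascoli along an exhaustion of $\dot{\C}$ by compacts produces a locally uniform limit on $\dot{\C}$, while Banach--Alaoglu on locally finite Radon measures provides a compatible distributional limit on $\C$.

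\emph{Bounds.} For (i), the reproducing-kernel extremal characterization writes $R_n(z)$ as the supremum of $r_n^2|f(r_n z)|^2 e^{-nV_n(r_n z)}$ over $f\in\calP_n$ with $\|f\|_{L^2(\mu_n)}\le 1$. For $z$ in a compact $K\subset\dot{\C}$ and $n$ large, the disk $D(r_n z,\delta r_n)$ avoids a fixed neighborhood of~$0$ once $\delta>0$ is chosen small; there $V_n$ is real-analytic and, because $nr_n^{2k}=1$, the oscillation of $nV_n$ on such a disk is only $O(1)$. The sub-mean-value inequality applied to the subharmonic function~$|f|^2$ then gives
\[
r_n^2|f(r_n z)|^2 e^{-nV_n(r_n z)}\le \frac{C}{\delta^2}\int_{D(r_n z,\delta r_n)}|f|^2 e^{-nV_n}\,dA \le \frac{C}{\delta^2},
\]
whence $R_n(z)\le C(K)$ uniformly. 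For (ii), the change of variables yields $\int_{D(0,L)}R_n\,dA = \int_{D(0,Lr_n)}\bfR_n\,dA$, i.e.\ the expected number of particles in a microscopic disk about~$0$, which by the standard bulk estimate on $\bfR_n$ together with the log-insertion contribution is bounded by $|c|+O(L^{2k})$, uniformly in~$n$.

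\emph{Equicontinuity and extraction.} For (iii), I would polarize: on a simply-connected neighborhood of a compact $K\subset\dot{\C}$, the function $nV_n(r_n\,\cdot\,)$ is real-analytic (the factor $|\zeta|^{-2c}=(\zeta\bar\zeta)^{-c}$ polarizes unambiguously as $(\zeta\bar\eta)^{-c}$ with a chosen branch), so it extends sesqui-analytically to some $F_n(z,\bar w)$, and
\[
\widetilde K_n(z,w):=r_n^2\,\bfk_n(r_n z,r_n w)\,e^{-F_n(z,\bar w)}
\]
is holomorphic in~$z$, anti-holomorphic in~$w$, and satisfies $\widetilde K_n(z,z)=R_n(z)$. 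The reproducing-kernel Cauchy--Schwarz $|\bfk_n(\zeta,\eta)|^2\le\bfk_n(\zeta,\zeta)\bfk_n(\eta,\eta)$ combined with~(i) gives a uniform bound on $\widetilde K_n$ in a complex neighborhood of the diagonal in $K\times K$; Cauchy's formula then yields uniform bounds on $\partial_z\widetilde K_n$ and $\partial_{\bar w}\widetilde K_n$, and restricting to the diagonal makes $R_n$ uniformly Lipschitz on~$K$. With (i)--(iii) in hand, Arzel\`a--Ascoli (diagonalized over an exhaustion of $\dot{\C}$ by compacts) extracts a subsequence converging locally uniformly on $\dot{\C}$ to some~$R$; Banach--Alaoglu, using~(ii), extracts a further subsequence converging in the sense of distributions on~$\C$; the two limits coincide on $\dot{\C}$, and letting $L\to 0$ in~(ii) shows the distributional limit carries no atom at~$0$. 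The main technical obstacle is step~(iii)---arranging the polarization $F_n$ so that the Cauchy estimates are uniform in~$n$---but this is local and routine once $K$ is fixed off the singularity.
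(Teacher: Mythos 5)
Your overall compactness scheme (pointwise bounds, local uniform integrability, polarized kernel plus Cauchy estimates, diagonal extraction) is the same as the paper's, but two of your three inputs are not actually established. In step (i), the claim that the oscillation of $nV_n$ on $D(r_nz,\delta r_n)$ is $O(1)$ "because $nr_n^{2k}=1$" is false in general: in the canonical decomposition $Q=Q_0+\re H+Q_1$ the harmonic part $H$ contains analytic terms of degrees $1,\dots,2k-1$, and already the linear term gives an oscillation of order $n r_n|\d Q(0)|\delta\sim n^{1-1/2k}\to\infty$; the normalization $nr_n^{2k}=1$ only tames the top-degree part $Q_0$. (Also the disk $D(r_nz,\delta r_n)$ does not avoid a fixed neighbourhood of $0$; it merely avoids the point $0$, lying at distance comparable to $r_n$ from it.) The standard repair --- and what the paper does in Lemma \ref{lem:est} --- is to keep the weight inside the subharmonic function, i.e.\ to apply the sub-mean inequality to $|u|^2e^{-\tilde{V}_n+\alpha|z|^2}$ (harmonic terms do not affect subharmonicity), or equivalently to absorb $H$ by replacing $u$ with $u\,e^{-\tilde{H}_n/2}$; as written, your step (i) does not go through.

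The more serious gap is at the origin. Your step (ii) invokes "the standard bulk estimate on $\bfR_n$ together with the log-insertion contribution", but no such a priori estimate is available at a singular bulk point: a uniform-in-$n$ control of $R_n$ near $0$ is precisely the content of the paper's Lemmas \ref{lem:est}--\ref{lem:rn}, which give $R_n(z)\le BT^{4k+2c'-2}|z|^{-2c'}$ for $|z|\le T$, with $c'\in[0,1)$ the fractional part of $c$ (the case $c<0$ being handled by the substitution $u\mapsto z^qu$ of Lemma \ref{lem:q}); your appeal to a bulk estimate at the singularity is essentially circular. Moreover, even granting your bound $\int_{D(0,L)}R_n\,dA\le|c|+O(L^{2k})$, this only bounds the mass uniformly; it does not prevent mass from concentrating at $0$ along the subsequence, so your final assertion that "letting $L\to0$ shows the distributional limit carries no atom at $0$" does not follow (the bound does not tend to $0$ with $L$, and uniform mass bounds alone never rule out atoms). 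What is needed, and what the paper supplies, is uniform local integrability near $0$, i.e.\ a pointwise bound of the form $R_n(z)\le C|z|^{-2c'}$ with $2c'<2$. With such a bound in hand, your step (iii) and the extraction are sound and essentially coincide with the paper's normal-family argument for the kernels $\ell_n(z,w)=(z\bar w)^qL_n(z,w)$ in Theorem \ref{lem:normal}.
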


We will refer to a limit $R=\lim R_{n_k}$ in Theorem \ref{MT1.5} as a \textit{(microscopic) density} (called "limiting one-point function'' in \cite{AKM,AS}).

\begin{mth} \label{MT2} There exists a constant $\con=\con[Q,0]>0$ such that each density $R$ at $0$ satisfies
\begin{equation*}R(z)=\Lap Q_0(z)\cdot (1+O(e^{-\con|z|^{2k}}))\quad \text{as}\quad z\to\infty.\end{equation*}
\end{mth}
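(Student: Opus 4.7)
\textbf{Proof plan for Theorem \ref{MT2}.}
By Theorem \ref{MT1.5}, it suffices to show that every subsequential limit $R$ of $\{R_n\}$ coincides on $\dot{\C}$ with the Bergman function $R_0$ of the Fock-Sobolev space $L^2_a(\mu_0)$ studied in Theorem \ref{MT1}. Once this identification is made, the exponential asymptotic \eqref{mil} gives the claimed estimate for $R$, and therefore for every microscopic density at $0$.

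To make the identification I would rescale the reproducing kernel $\bfk_n$ of $\calP_n$ about $0$: set $\zeta=r_n z$, $\eta=r_n w$, and study $K_n(z,w):=r_n^2\bfk_n(r_n z,r_n w)$. Combining the canonical decomposition $Q=Q_0+\re H+Q_1$, the normalization $nr_n^{2k}=1$, and the expansion $G_0(\zeta)=2\log|\zeta|+2h(\zeta)$ with $h$ harmonic near $0$, one finds
\[
nV_n(r_n z)=Q_0(z)+2c\log|z|+\re H_n(z)+2c\log r_n+O(r_n|z|^{2k+1})\quad(n\to\infty),
\]
locally uniformly on $\dot{\C}$, where $H_n$ is holomorphic in a fixed disk. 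The pluriharmonic term $\re H_n$ can be absorbed by the analytic gauge transformation $\bfk_n(\zeta,\eta)\mapsto e^{-H_n(z)/2-\overline{H_n(w)/2}}\bfk_n(\zeta,\eta)$, which preserves the one-point function; after this reduction the rescaled weight converges to $V_0(z)=Q_0(z)+2c\log|z|$ on compacts of $\dot{\C}$.

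The main step is then to verify that any subsequential limit of the rescaled gauged kernels is precisely the Bergman kernel $L_0$ of $L^2_a(\mu_0)$. I would argue this along the lines of \cite{AS,AKM,AKMW}: a priori upper bounds for $\bfR_n$ (via the usual Christoffel-Bergman extremal inequality together with subharmonicity of $|u|^2 e^{-nV_n}$) yield locally uniform bounds for $K_n$ on $\dot{\C}$ and thus normality; passing to the limit in the reproducing identity for test polynomials produces a candidate kernel satisfying the reproducing property at finite points of $\dot{\C}$. For the matching lower bound one approximates the orthonormal basis $\{e_j\}$ of $L^2_a(\mu_0)$ by weighted polynomials of degree $<n$ and plugs them into the extremal characterization of $\bfk_n$; together with the fact that entire functions in $L^2_a(\mu_0)$ are of order at most $2k$ and can therefore be cut off with exponentially small cost, this shows the limit kernel dominates $L_0$ and hence equals it.

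The main obstacle is controlling the polynomial-degree truncation $\deg<n$ against genuine entire functions in $L^2_a(\mu_0)$ through the logarithmic singularity at the origin, together with managing the tail where the approximation $nQ(\zeta)\approx Q_0(r_n^{-1}\zeta)$ breaks down. These are exactly the approximation/gluing estimates that Theorem \ref{MT1} is designed to feed into: once $R\equiv R_0$ on $\dot{\C}$ is established, Theorem \ref{MT1} furnishes the exponentially small error term $\Lap Q_0(z)\cdot (1+O(e^{-\con|z|^{2k}}))$ for the microscopic density $R$, completing the proof.
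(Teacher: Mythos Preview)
Your plan has a circularity problem and an unproven step.

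\textbf{Circularity.} In this paper, Theorem \ref{MT1} is \emph{derived from} Theorem \ref{MT2}: one exhibits the Bergman function $R_0$ as a particular microscopic density (Theorem \ref{final}) and then applies the asymptotic in Theorem \ref{MT2}. So invoking Theorem \ref{MT1} to prove Theorem \ref{MT2} reverses the logical order and is circular as the paper is set up. Theorem \ref{MT1} has no independent proof here.

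\textbf{The identification $R=R_0$.} Your plan rests on showing that \emph{every} subsequential limit $R$ equals $R_0$. The paper does not prove this. What is proved (Theorem \ref{post}, Corollary \ref{bodo}) is only the one-sided inequality $R\le R_0$, via Fatou's lemma and Aronszajn's theorem. The matching lower bound you sketch --- approximating the orthonormal basis of $L^2_a(\mu_0)$ by polynomials of degree $<n$ and plugging into the extremal characterization of $\bfk_n$ --- is the genuinely hard step; it requires controlling the tail of the $L^2(\mu_n)$-norm of such approximants outside the region where $nQ_n(r_n z)\approx Q_0(z)$, and the logarithmic singularity complicates cutting off. The paper explicitly defers this uniqueness question to \cite{AKS}. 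Without it your argument does not close.

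\textbf{What the paper actually does.} The proof of Theorem \ref{MT2} is direct and does not go through $R_0$ at all. One constructs an \emph{approximate kernel} $\bfL_n^\sharp(\zeta,\eta)=n\,\d_1\dbar_2 V_n(\zeta,\eta)\,e^{nV_n(\zeta,\eta)}$ and the associated approximate projection $\pi_n^\sharp$. Lemma \ref{mack} (an integration-by-parts argument using the basic Taylor estimate of Lemma \ref{mend}) shows that $\pi_n^\sharp[\chi_\zeta u](\zeta)$ reproduces $u(\zeta)$ up to an error $M_n(\zeta)\|u\|_{L^2(\mu_n)}e^{nV_n(\zeta)/2}$. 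Taking $u=\bfk_{n,\zeta}$ yields \eqref{hk1:eq}. Lemma \ref{bp1} then uses H\"ormander's $\dbar$-estimate to compare $\pi_n[\chi_\zeta\bfL_\zeta^\sharp](\zeta)$ with $n\Lap Q(\zeta)e^{nV_n(\zeta)}$, producing an exponentially small error $N_n(\zeta)$. Rescaling $z=r_n^{-1}\zeta$ and combining gives
\[
|R_n(z)-\Lap Q_0(z)|\le r_nM_n(r_nz)\sqrt{R_n(z)}+r_n^2N_n(r_nz),
\]
and passing to a subsequential limit $R$, together with the polynomial a priori bound $R(z)\le B|z|^{4k-2}$ from Theorem \ref{lem:normal}, yields the exponential asymptotic. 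No comparison with $R_0$ is needed.
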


\begin{proof}[Proof that Theorem \ref{MT2} implies Theorem \ref{MT1}] We shall show in Section \ref{vogg} that each Bergman function $R_0$ in Theorem \ref{MT1} equals to a density $R$
 (see Theorem \ref{final} below). To prove Theorem \ref{MT1} it thus suffices to appeal to the asymptotics in Theorem \ref{MT2}.
\end{proof}

\begin{rem} In Section \ref{vogg}, it will be seen that $R\le R_0$ and $R_0(z)\sim |z|^{2c}$ as $z\to 0$.
In particular $R(0)=0$ if $c>0$. (See Figure 1.)
\end{rem}

It is sometimes convenient to allow $Q_0$ to be homogeneous of degree $2\lambda$
where $\lambda>0$ is not necessarily an integer. (We also require $Q_0$ to be real-analytic in $\C^*$.)
In this case, a suitable microscopic scale is $r_n=n^{-1/2\lambda}$. Correspondingly, we can consider functions $Q=Q_0+\re H+Q_1$
where $H$ is a polynomial of degree at most $2\lambda$ and where $Q_1$ is real-analytic
in a punctured neighbourhood of the origin and satisfying $Q_1=O(|\zeta|^{2(\lambda+\eps)})$ and $\Lap Q_1=O(|\zeta|^{2(\lambda-1+\eps)})$ as $\zeta\to 0$
where $\eps>0$. Consider an associated potential $V_n$ of the form \eqref{gnoll}.

\begin{mth}\label{extm} Suppose that $\lambda>0$. Then on replacing "$k$" by "$\lambda$", our above results remain in force with respect to the above class of potentials.
In particular we have for each density $R$ that $R(z)=O(|z|^{2c})$ as $z\to0$ and $R(z)=\Lap Q_0(z)\cdot (1+O(e^{-\con|z|^{2\lambda}}))$ as $z\to\infty$.
\end{mth}

{
The proof is by adaptation of the case of integer $\lambda$, see Section \ref{expf}.}

\subsection{Comments} We now explain the roles played by the polynomial $Q_0$ and the parameters $c$, $k$ appearing in
\eqref{vdef}.

Consider the $n$-dependent conformal metric
$$ds_n^{\,2}(\zeta)=e^{-n V_n(\zeta)}\, |d\zeta|^2.$$
Rescaling as in \eqref{scaling},
one obtains the microscopic version $ds^2(z)=e^{- V_0(z)}\, |dz|^2$.
By reference to the latter metric, we say that the origin is a:
\begin{enumerate}[label=(\alph*)]
\item \label{o1} \textit{regular bulk point} if $k=1$ and $c=0$,
\item \label{o2} \textit{bulk singularity caused by vanishing equilibrium density} if $k\ge 2$,
\item \label{o3} \textit{conical singularity} (with total angle $2\pi(1+c)$) if $c\ne 0$.
\end{enumerate}

We can alternatively interpret $R(z)$ as the microscopic one-point intensity in external potential $Q$,
under insertion of
a charge of strength $c$ at the origin.

The figure below shows a few examples of radially symmetric
singularities.

\begin{figure}[ht]\label{fig1}
\begin{center}

\includegraphics[width=.3\textwidth]{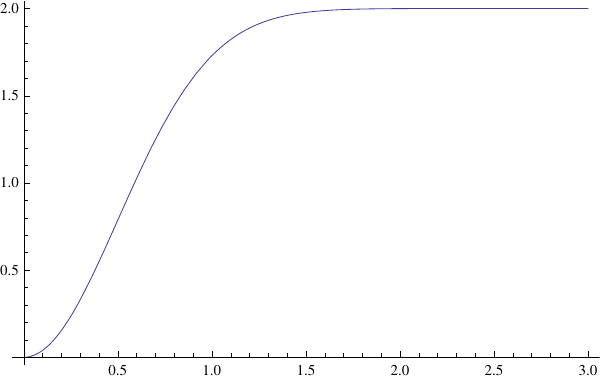}
\hspace{.025\textwidth}
\includegraphics[width=.3\textwidth]{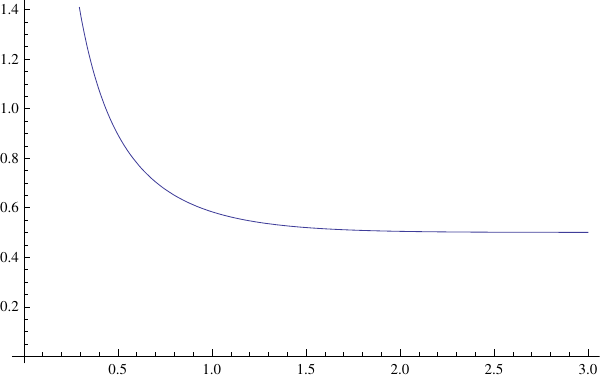}
\hspace{.025\textwidth}
\includegraphics[width=.3\textwidth]{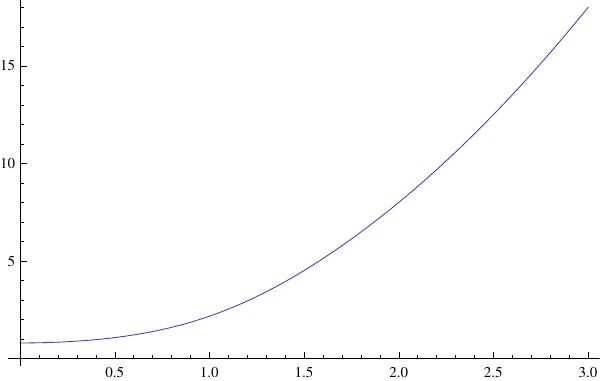}
\end{center}
{
\caption{The Bergman function $R_0$ as a function of positive reals,
for $V_0=2|z|^2-2\log|z|$, $V_0=|z|^2/2+\log|z|$, and $V_0=|z|^4/2$, respectively.}}
\end{figure}

Note that one can superpose the types \ref{o2} and \ref{o3}, thus obtaining "combined'' singularities.

From the applied point of view \cite{AKS} the point of our present estimates for $\bfR_n(\zeta)$ is similar to what is done in the paper \cite{AKM}: we require a rough knowledge of the leading term in $n$, suitable for insertion into the microscopic version of the loop equation, namely the distributional Ward's equation in \cite{AKS}.
Conical singularities have appeared in several other investigations, such as in \cite{HT,KM,LCCW,LY,WW}.

Our methods below
generalize techniques from the papers
\cite{AKM,AS}, which are related to the well-known Tian-Catlin-Zelditch expansion for Bergman kernels.
See \cite{A2} and references for a brief account in the present context.

While we shall not elaborate this point, we note that asymptotic for the Bergman function is related to questions of sampling and interpolation in spaces of entire functions -- $L^2_a(\mu_0)$ in our case. The theory of sampling and interpolation in the classical Fock spaces $F^p_\alpha$ has been well investigated, see \cite{Z}. In \cite[Section 5]{FGHKR}, it is observed that asymptotic for the Bergman function should play a role in the analysis for non-standard weights, such as the ones considered here, cf. also \cite{D,MMOC,MOC}.

\section{A preliminary estimate}  \label{Sec_AP1}
The purpose with this section is to illuminate some of our key constructions in a slightly simplified context, with a "static" weight.
The same method will be elaborated
in the next section for the case of a "varying" ($n$-dependent) weight.
We start by fixing some notation.

A continuous function $h(z,w)$ is called \textit{Hermitian} if $h(z,w)=\overline{h(w,z)}$.
$h$ is called \textit{Hermitian-analytic} (\textit{Hermitian-entire}) if $h$ is Hermitian and analytic (entire) in $z$ and $\bar{w}$.

Given $Q_0$ and $c$ as in \textsection \ref{micropot}, we write $Q_0(z,w)$ for the Hermitian-analytic polynomial such that $Q_0(z,z)=Q_0(z)$, and we define
$$V_0(z,w):=Q_0(z,w)-c\log|z|-c\log|w|.$$

We define an "approximate kernel'' $L_0^\sharp$ by
$$L_0^\sharp(z,w)=
\left[\d_z\dbar_w Q_0\right](z,w)\cdot e^{\,V_0(z,w)}.$$
Write $L_z^\sharp(w)$ for $L_0^\sharp(w,z)$ and, for suitable functions $u$,
$$\pi^\sharp u(z)=\langle u,L_z^\sharp\rangle_{L^2(\mu_0)}=\int_\C u \bar{L}_z^\sharp \, d\mu_0,$$
where $d\mu_0(w)=e^{-V_0(w)}\, dA(w).$

Below we fix a point $z\ne 0$.

\begin{lem} \label{mend} There are positive numbers $m$ and $\num$
such that
$$|1-w/z|<m\quad \Rightarrow\quad 2\re V_0(z,w)\le V_0(z)+V_0(w)-\num|z|^{2k-2}|w-z|^2.$$
\end{lem}

\begin{proof}
Note that $2\re V_0(z,w)-V_0(z)-V_0(w)=2\re Q_0(z,w)-Q_0(z)-Q_0(w)$. The proof now follows by applying Taylor's formula to $Q_0$ in the right hand side.
\end{proof}

Given $z$ with $z\ne 0$ we will consider the annular region consisting of all $w$ such that
$$\tilde{\rho}:=m|z|/2<|w-z|<\rho:=m|z|.$$
Choosing $m$ small enough, we may assume that, throughout this region
$$c_1<|w/z|<c_2$$ for some positive constants $c_1,c_2$.

We now fix a smooth function $\psi_z$ which equals to $1$ on $D(z,\tilde{\rho})$ and to
$0$ outside $D(z,\rho)$, and $\|\dbar\psi_z\|_{L^2}\le \const$ Finally, we define
$$\chi_z(w):=|z/w|^c\psi_z(w).$$

\begin{lem} \label{uv} There is a constant $C$ depending only on $m$ such that for all functions $u\in L^2_a(\mu_0)$ and all $z\ne 0$ we have
$$|\,u(z)-\pi^\sharp[\chi_zu](z)\,|\le C
|z|^{1-k}\|\,u\,\|_{L^2(\mu_0)}\,e^{\,V_0(z)/2}.$$
\end{lem}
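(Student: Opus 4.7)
The plan is to rewrite the density of the approximate kernel
$L_0^\sharp(z,w) e^{-V_0(w)} = (\partial_z \bar\partial_w Q_0)(z,w)\,e^{\phi(z,w)}$, where $\phi(z,w) := V_0(z,w) - V_0(w)$, as a $\bar\partial_w$-derivative plus a controllable remainder, and then to integrate by parts against $\chi_z u$. To this end I introduce the auxiliary function $A(z,w) := \partial_z Q_0(z,w) - \partial Q_0(z)$, which vanishes at $w = z$ and satisfies $\bar\partial_w A = \partial_z \bar\partial_w Q_0$; Leibniz' rule then gives the identity
\[ (\partial_z \bar\partial_w Q_0)\, e^{\phi} = \bar\partial_w\bigl[A\,e^\phi\bigr] - A\,(\bar\partial_w \phi)\,e^\phi. \]

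Multiplying by $\chi_z u$ with $u$ entire, the first term integrates by parts to $-\int u\,\bar\partial\chi_z\cdot A e^\phi\,dA$ (no boundary terms, since $\chi_z$ has compact support in $D(z,\rho) \subset \dot{\C}$ and $\bar\partial u=0$). Combining with the Cauchy--Pompeiu identity $u(z) = -\int u\,\bar\partial\chi_z/(w-z)\,dA$ yields the decomposition
\[ u(z) - \pi^\sharp[\chi_z u](z) = \int u\,\bar\partial\chi_z\Bigl[A\,e^\phi - \tfrac{1}{w-z}\Bigr]\,dA + \int \chi_z u\,A\,(\bar\partial_w \phi)\,e^\phi\,dA. \]
Each integral is then to be estimated by Cauchy--Schwarz in $L^2(\mu_0)$ against the weight $e^{V_0}$, with Lemma~\ref{mend} providing the Gaussian bound $|e^\phi|^2 \le e^{V_0(z) - V_0(w)}\,e^{-\num|z|^{2k-2}|w-z|^2}$. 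On the annulus $\tilde\rho < |w-z| < \rho$ carrying $\bar\partial\chi_z$, the Gaussian forces $A e^\phi$ to be exponentially small in $|z|^{2k}$, while the $1/(w-z)$ contribution is controlled via $|w-z| \ge \tilde\rho \sim |z|$ together with $\|\bar\partial\chi_z\|_{L^2} \le \const$. For the second integral the pointwise bounds $|A(z,w)| \le C|z|^{2k-2}|w-z|$ and $|\bar\partial_w \phi| \le C(|z|^{2k-2}|w-z| + |z|^{-1})$ on $D(z,m|z|)$, combined with the Gaussian moments $\int |w-z|^{2j}\,e^{-\num|z|^{2k-2}|w-z|^2}\,dA \sim |z|^{-2(k-1)(j+1)}$, deliver the claimed polynomial factor.

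The main difficulty is that $\bar\partial_w \phi$ does not vanish on the diagonal $w = z$: because $V_0(z,w)$ is built from $c\log|z| + c\log|w|$ rather than the Hermitian-analytic polarization of the logarithm, the derivative $\bar\partial_w \phi$ retains the residue $-c/(2\bar w) = O(|z|^{-1})$ at $w = z$. This non-cancellation is why the second integral---not the exponentially small first one---drives the estimate, and is precisely what produces the $|z|^{1-k}$ factor; in the pure Fock case $c = 0$ the analogous calculation would improve by one power of $|z|$.
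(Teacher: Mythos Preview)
Your decomposition is a correct identity, but the ensuing estimates do not yield the bound; there are two separate problems.

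\textbf{The Cauchy--Pompeiu piece is unweighted.} In your first integral the term $-\int u\,\bar\partial\chi_z\,(w-z)^{-1}\,dA$ carries no factor $e^\phi$. Cauchy--Schwarz in $L^2(\mu_0)$ then leaves you with $\bigl(\int|\bar\partial\chi_z|^2|w-z|^{-2}e^{V_0(w)}\,dA\bigr)^{1/2}\asymp|z|^{-1}\exp\bigl(\tfrac12\sup_{|w-z|\le m|z|}V_0(w)\bigr)$. Since $Q_0$ is only assumed positive and homogeneous (not radial), for $z$ pointing in a direction where $Q_0$ is small one can have $\sup_{|w-z|\le m|z|}Q_0(w)>2Q_0(z)$, so this factor is not dominated by $e^{V_0(z)}$ (let alone by $e^{V_0(z)/2}$, which is what the proof actually establishes and what the $n$-dependent Lemma~\ref{mack} requires). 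The paper avoids this entirely: it writes the whole integrand as $-\tfrac{F(z,w)}{w-z}\,\bar\partial_we^\phi$ with $F(z,z)=1$, so that integration by parts produces $u(z)$ from the delta at $w=z$ with the harmless weight $e^{\phi(z,z)}=1$, and \emph{every} remaining term still carries $e^\phi$ and hence the Gaussian bound from Lemma~\ref{mend}.

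\textbf{The bulk remainder is one order too large.} In your second integral the dominant part of $\bar\partial_w\phi$ on $\supp\chi_z$ is not the log residue $-c/(2\bar w)=O(|z|^{-1})$ but the $Q_0$-contribution $\bar\partial_2 Q_0(z,w)-\bar\partial Q_0(w)=O(|z|^{2k-2}|w-z|)$. Together with $|A|\le C|z|^{2k-2}|w-z|$ this gives $|A\,\bar\partial_w\phi|\lesssim|z|^{4k-4}|w-z|^2$, and after Cauchy--Schwarz and the Gaussian moment $\int|w-z|^4e^{-\gamma|z|^{2k-2}|w-z|^2}\,dA\asymp|z|^{-6(k-1)}$ you obtain the factor $|z|^{k-1}$, not $|z|^{1-k}$. (The log term by itself would yield only $|z|^{-1}$, again not $|z|^{1-k}$ for general $k$, and is in any case subdominant; your final remark about the $c=0$ case is therefore also incorrect.) The paper's choice $F(z,w)=(w-z)\,\d_1\dbar_2Q_0(z,w)\big/\bigl(\dbar_2Q_0(w,w)-\dbar_2Q_0(z,w)\bigr)$ is engineered so that $\bar\partial_wF=O(w-z)$; hence the integration-by-parts remainder $\bar\partial_wF/(w-z)$ is $O(1)$ uniformly in $|z|$, and it is this $O(1)$ integrand against the bare Gaussian that produces the factor $|z|^{1-k}$.
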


\begin{proof} The definition of $L_z^\sharp$ is set up so that
\begin{equation}\label{pa:1}\begin{split}\pi^\sharp[\chi_zu](z)&=\int_\C\chi_z(w)u(w)[\d_z\dbar_wQ_0](z,w)\cdot e^{\,V_0(z,w)-V_0(w)}\, dA(w)\\
&=-\int_\C\frac {u(w)\psi_z(w)F(z,w)}{w-z} \dbar_w[e^{\,Q_0(z,w)-Q_0(w)}]\, dA(w),\\
\end{split}
\end{equation}
where
\begin{equation*}F(z,w)=\frac {(w-z)[\d_z\dbar_w Q_0](z,w)}{\dbar_wQ_0(w,w)-\dbar_wQ_0(z,w)}.\end{equation*}
By Taylor's formula, $F$ obeys
\begin{equation}\label{bestims}F(z,w)=1+O(w-z),\quad \dbar_w F(z,w)=O(w-z),\quad (w\to z).\end{equation}

We now integrate by parts in the identity \eqref{pa:1}, with the result that
$$\pi^\sharp[\chi_z u](z)=u(z)+\epsilon_1+\epsilon_2$$ where
\begin{align*}\epsilon_1&=\int_{|w-z|\ge\tilde{\rho}} \frac {u(w)\cdot\dbar\psi_z(w)\cdot F(z,w)}{w-z}
\frac {e^{V_0(z,w)-V_0(w)}}{|w/z|^c}\, dA(w),\\
\epsilon_2&=\int\frac {u(w)\cdot \chi_z(w)\cdot\dbar_w F(z,w)}{w-z}e^{V_0(z,w)-V_0(w)}\, dA(w),
\end{align*}
where we used that $\dbar\psi_z=0$ on $D(z,\tilde{\rho})$.

By the estimates \eqref{bestims}
we obtain that
\begin{align*}\babs{\epsilon_1}&\le C\tilde{\rho}^{-1}\int_{|w-z|\ge \tilde{\rho}}\babs{u(w)}|\dbar\psi_z(w)|e^{\re V_0(z,w)-V_0(w)}\, dA(w),\\
\babs{\epsilon_2}&\le C\int\chi_z(w)\babs{u(w)}e^{\re V_0(z,w)-V_0(w)}\, dA(w).
\end{align*}
For $w$ in the support of $\chi_z$ we have, by Lemma \ref{mend}, that
\begin{equation}\label{t11}e^{\re V_0(z,w)-V_0(w)/2}\le Ce^{V_0(z)/2-\num|z|^{2k-2}|z-w|^2}.\end{equation}
This gives
\begin{align*}\babs{\epsilon_1}e^{-V_0(z)/2}&\le C\tilde{\rho}^{-1}e^{-\num \babs{z}^{2k-2}\tilde{\rho}(z)^2}
\int\babs{u(w)}|\dbar\psi_z(w)| e^{-V_0(w)/2}
\, dA(w)\\
& \le C'|z|^{-1}e^{-\num (m^2/4)|z|^{2k}}
\|u\|_{L^2(\mu_0)}\|\dbar\psi_z\|_{L^2}.
\end{align*}
By \eqref{t11}, we have that
\begin{align*}|\epsilon_2|e^{-V_0(z)/2}&\le C\|u\|_{L^2(\mu_0)}(\int e^{-2\num|z|^{2k-2}|w-z|^2} dA(w))^{1/2}\\
&\le
C_1|z|^{1-k}\|u\|_{L^2(\mu_0)},\end{align*}
where we used that $\int e^{-t|w|^2}\, dA(w)=t^{-1}$ for $t>0$.
\end{proof}

\begin{rem} Lemma \ref{uv} can be generalized to the case when $k=\lambda$ is an arbitrary positive real number, i.e.,
when $Q_0$ is homogeneous of degree $2\lambda$. In this case we extend $Q_0$ to a Hermitian-analytic function $Q_0(z,w)$ in a
 neighbourhood of a point $z=z_0\ne 0$, so that $Q_0(z,z)=Q_0(z)$ for $z$ near $z_0$. (This can be done by virtue
 of the real analyticity of $Q_0$.) Lemma \ref{uv} generalizes to
this setting, since the proof only involves values of $Q_0(z,w)$ for fixed
$z\ne 0$ and $w$ in a suitable neighbourhood of $z$.
\end{rem}

\section{Asymptotics for microscopic densities} \label{Sec_AP2}

In this section, we prove
Theorem \ref{MT2} modulo an apriori estimate for micro-densities, which we postpone to Theorem \ref{lem:normal}.

Recall first the form of the $n$-dependent potential
\begin{equation}\label{bud}V_n(\zeta)=Q(\zeta)-\frac {2c} n\log|\zeta|-{\frac {1} n h(\zeta)},\end{equation}
where
$Q(0)=h(0)=0$. Write $d\mu_n=e^{-nV_n}\, dA$.

In the following we fix a point $\zeta$ in the annular region
\begin{equation}\label{annular}r_n\le|\zeta|\le r_n\log n.\end{equation}
We assume below that $n$ is large enough so that none of the singularities $a_j\ne 0$ is within distance $2r_n\log n$ from the origin.

\begin{lem} \label{pointl2} Write  $D_\delta=D(\zeta,\delta)$, $\delta=\sigma|\zeta|$, where $\sigma$ is a constant, $0<\sigma<1/2$.
 Let $u$ be a function of the form $u=gP$, where $g$ is holomorphic and $P$ is smooth and nonvanishing in
some neighbourhood of the closure $\overline{D}_{\delta}$.
Then
$$\babs{u(\zeta)}e^{-nV_n(\zeta)/2}\le  C \delta^{-1}e^{\tau n|\zeta|^{2k}} \|u\|_{L^2(\mu_n)},$$
where the number $\tau>0$ can be chosen proportional to $\sigma^2$.
\end{lem}

\begin{proof} Put $$F(\omega)=\babs{u(\omega)}^2e^{-nV_n(\omega)+na|\omega|^2},$$
where $a$ is any number with $a>\sup_{D_\delta}(\Lap Q{-\Lap( h+2\log |P|)/n})$.
 Then $F$ is logarithmically subharmonic in $D_\delta$. Since $\Lap Q\sim \Lap Q_0$ on $D_\delta$, we may choose $a$ proportional to $|\zeta|^{2k-2}$.

The sub-mean property shows that, for some positive constant $c_1=c_1[Q_0]$,
\begin{align*}\babs{u(\zeta)}^2e^{-nV_n(\zeta)}&\le C\frac 1 {\delta^2}\int_{D_\delta}\babs{u(\omega)}^2e^{-nV_n(\omega)}e^{c_1n|\zeta|^{2k-2}(|\omega|^2-|\zeta|^2)}\,
dA(\omega)\\
&\le   C' e^{3c_1\sigma^{2} n|\zeta|^{2k}} \frac 1 {\delta^2}\int_{D_\delta}\babs{u(\omega)}^2e^{-nV_n(\omega)}\, dA(\omega).
\end{align*}
The proof of the lemma is complete.
\end{proof}

For a point $\zeta$ in the annular region \eqref{annular} and a
 small enough positive constant $m$ we define $\tilde{\rho}=m|\zeta|/2$ and $\rho=m|\zeta|$. We take $m$ small enough that
 $c_1<|\omega/\zeta|<c_2$ when $|\omega-\zeta|<\rho$ for some constants
 $c_1,c_2>0$.

Next we fix a smooth function $\psi_\zeta$
which equals to $1$ on $D(\zeta,\tilde{\rho})$ and to $0$ outside
of $D(\zeta,\rho)$, and obeys $\|\dbar\psi_\zeta\|_{L^2}\le C$. Consider the function
$$\chi_\zeta(\omega):=|\zeta/\omega|^ce^{h(\zeta)/2-h(\omega)/2}\psi_\zeta(\omega).$$

Let
$Q(\eta,\omega)$ be a Hermitian-analytic function defined in a neighbourhood of the origin, satisfying $Q(\eta,\eta)=Q(\eta)$. We shall write
$$V_n(\eta,\omega)=Q(\eta,\omega)-\frac {c} n(\log|\eta|+\log|\omega|){-\frac 1 {2n}(h(\eta)+h(\omega)).}$$

The idea is to adapt the construction preceding section, but with $V_0$ replaced by $nV_n$. To this end, we define the "approximate kernel"
$$\bfL_{n}^\sharp(\zeta,\eta)=n\,\d_\zeta\dbar_\eta Q(\zeta,\eta)\cdot e^{\,nV_n(\zeta,\eta)}.$$
Finally we define, for suitable functions $u$, the operator
$$\pi_{n}^\sharp u(\zeta)=\langle u,\bfL_\zeta^\sharp\rangle_{L^2(\mu_n)},\quad d\mu_n(\eta)=e^{-nV_n(\eta)}\, dA(\eta),$$
where, for convenience, we write $\bfL^\sharp_\zeta$ instead of $\bfL^\sharp_{n,\zeta}$.

\begin{lem} \label{mack}
If $u$ is holomorphic in a neighbourhood of $\zeta$, then
$$|\,u(\zeta)-\pi_{n}^\sharp[\chi_\zeta u](\zeta)\,|\le
M_n(\zeta)
\|\,u\,\|_{L^2(\mu_n)}e^{nV_n(\zeta)/2},$$
where, for some constants $C,\para>0$
\begin{equation}\label{mndef}M_n(\zeta)=C(n^{-1/2}|\zeta|^{1-k}+|\zeta|^{-1}e^{-\para n|\zeta|^{2k}}).\end{equation}
\end{lem}

\begin{proof} The proof is accomplished by adapting the method from Lemma \ref{uv}. Recall that $\zeta$ is fixed
in the annular region \eqref{annular} and note that
\begin{align*}\pi_n^\sharp[\chi_\zeta u](\zeta)&=-\int\frac {u(\omega)\psi_\zeta(\omega)F(\zeta,\omega)} {\omega-\zeta}
\dbar_\omega[e^{-n(Q(\omega,\omega)-Q(\zeta,\omega))}]\, dA(\omega),
\end{align*}
where
$$F(\zeta,\omega)=\frac{(\omega-\zeta)\d_\zeta\dbar_\omega Q(\zeta,\omega)}
{\dbar_\omega Q(\omega,\omega)-\dbar_\omega Q(\zeta,\omega)}.$$
In the last expression one can replace $Q$ by $Q_0$ to negligible terms, whence

\begin{equation}\label{viso}F(\zeta,\omega)=1+O(\zeta-\omega),\quad \dbar_\omega  F(\zeta,\omega)=O(\omega-\zeta),\quad (\omega\to\zeta).\end{equation}
We can now write $\pi_n^\sharp u(\zeta)=u(\zeta)+\epsilon_1+\epsilon_2$ where
\begin{align*}\epsilon_1&=\int\frac {u(\omega)\cdot  \dbar\psi_\zeta(\omega)\cdot F(\zeta,\omega)}
{\omega-\zeta}\frac {e^{n\left(V_n(\zeta,\omega)-V_n(\omega)\right)}}
{|\omega/\zeta|^ce^{h(\omega)/2-h(\zeta)/2}}\, dA(\omega),\\
\epsilon_2&=\int\frac {u(\omega)\cdot\chi_\zeta(\omega)\cdot\dbar_\omega F(\zeta,\omega)}
{\omega-\zeta}e^{n(V_n(\zeta,\omega)-V_n(\omega))}\, dA(\omega).
\end{align*}
It follows from Lemma \ref{mend} that, for $\babs{\,\omega-\zeta\,}\le \rho$,
\begin{equation}\label{taly}e^{n(\re V_n(\zeta,\omega)-V_n(\omega)/2)}\le Ce^{\,nV_n(\zeta)/2-\num n|\zeta|^{2k-2}|\zeta-\omega|^2},\end{equation}
where $\num$ is a positive constant.
Inserting the estimates in \eqref{viso} and \eqref{taly}, using also that $\dbar\psi_\zeta=0$ on $D(\zeta,\tilde{\rho})$
we find that (with a suitable $\para>0$)
\begin{align*}\babs{\,\epsilon_1\,}e^{-nV_n(\zeta)/2}&\le C|\zeta|^{-1}e^{-\para n|\zeta|^{2k}}\int\babs{\,u(\omega)\,}|\,\dbar\psi_\zeta(\omega)\,|e^{-nV_n(\omega)/2} \, dA(\omega),\\
\babs{\,\epsilon_2\,} e^{-nV_n(\zeta)/2}&\le C\int\chi_\zeta(\omega)\babs{\,u(\omega)\,}e^{-nV_n(\omega)/2-\num n|\zeta|^{2k-2}|\zeta-\omega|^2}\, dA(\omega).
\end{align*}
Estimating the right hand sides by means of the Cauchy-Schwarz inequality, using that $\int e^{-t|w|^2}\, dA(w)=t^{-1}$, we conclude that
$$(\babs{\,\epsilon_1\,}+\babs{\,\epsilon_2\,})e^{-nV_n(\zeta)/2}\le C(|\zeta|^{-1}e^{-\para n|\zeta|^{2k}}+n^{-1/2}|\zeta|^{1-k})\|\,u\,\|_{L^2(\mu_n)}.$$
The proof is complete.
\end{proof}

We choose $u(\eta)=\bfk_{n}(\eta,\zeta)$ and observe that (see e.g. \cite[p. 30]{AS})
$$|\bfk_n(\zeta,\zeta)-\pi_n[\chi_\zeta \bfL_\zeta^\sharp](\zeta)|=
|\bfk_n(\zeta,\zeta)-\pi_n^\sharp[\chi_\zeta\bfk_{n,\zeta}]|,$$
where
$\pi_n:L^2(\mu_n)\to\calP_n$ is the orthogonal projection, i.e.,
$$\pi_n u(\zeta)=\langle u,\bfk_{n,\zeta}\rangle_{L^2(\mu_n)}.$$
By Lemma \ref{mack},
\begin{equation}\label{hk1:eq}|\,\bfk_n(\zeta,\zeta)-\pi_n[\chi_\zeta \bfL_\zeta^\sharp](\zeta)\,|
\le M_n(\zeta)\sqrt{\bfk_n(\zeta,\zeta)}\cdot e^{\,nV_n(\zeta)/2}.\end{equation}

We next want to estimate the number $\pi_n[\chi_\zeta \bfL_\zeta^\sharp](\zeta)$. For this purpose, we will estimate the norm-minimal solution to a $\dbar$-equation.

Recall that $\zeta$ is fixed in the annular region \eqref{annular}. We will write $\Poly(n)$ for the linear space of analytic polynomials of degree at most $n-1$.

\begin{lem} \label{lem:he}
There exists an element $v_0\in L^2(\mu_n)$ satisfying
$$\begin{cases}&\dbar v_0=\dbar(\chi_\zeta \bfL_\zeta^\sharp), \quad v_0-\chi_\zeta \bfL_\zeta^\sharp\in\Poly(n),\cr

& \cr
 &\|\,v_0\,\|_{L^2(\mu_n)}\le Cn^{-1/2} |\zeta|^{1-k}\|\,\dbar[\chi_\zeta \bfL_\zeta^\sharp]\,\|_{L^2(\mu_n)}.\cr
\end{cases}$$
\end{lem}
\begin{proof}
Let $\check{Q}$ be the obstacle function, i.e., $\check{Q}(\zeta) = -2U^{\sigma}(\zeta)+\gamma$ where $U^{\sigma}$ is the logarithmic potential of the equilibrium measure $\sigma$ and $\gamma$ is the constant which makes $Q = \check{Q}$ on $S$. (See \cite{ST}.) As is well-known
$\check{Q}$ is $C^{1,1}$-smooth in $\C$, harmonic outside $S$, and $\check{Q}\sim 2\log|\zeta|+O(1)$ as $\zeta\to\infty$.

Consider now the modification of $\check{Q}$ defined by $$\phi_n(\zeta)=\check{Q}(\zeta)+\frac{\alpha}{n}\log(1+|\zeta|^2) - \frac{1}{n}(h_0(\zeta)+c\log|\zeta|^2 + \sum_{j=1}^{s}c_j \log|\zeta-a_j|^2 ).$$
In view of the condition \eqref{slow}, we can, by choosing $\alpha$ large enough, make sure that
that (i)
all polynomials of degree $n-1$ are square-integrable with respect to the measure $e^{-n\phi_n} dA$, (ii) $\phi_n$ is strictly subharmonic on the open subset $X=\C\setminus \{0,a_1,\cdots, a_s\}$ of $\C$.

Put $d\mu'_n=e^{-n\phi_n}dA$ and write $\pi'_n$ for the orthogonal projection from $L^2(\mu'_n)$ to the subspace $\calP'_n$ of $L^2(\mu'_n)$ consisting of holomorphic polynomials of degree at most $n-1$.

Now let $v_0 = f - \pi_n'f$ where $f=\chi_\zeta \bfL_\zeta^\sharp$. Note that $\supp f\subset X\cap\Int S$.

Applying the $\dbar$-estimate in \cite[Section 4.2]{H} we have
\begin{equation}\label{eqh1}\|v_0\|^2_{L^2(X,\,\mu_n')}\leq \int_{X} |\dbar f|^2 \,\frac{e^{-n\phi_n}}{n\Lap \phi_n}dA.\end{equation}
Since
$\Lap \phi_n = [1+o(1)]\cdot \Lap Q_0 + O(n^{-1})$ on the support of $\dbar f$, we have by \eqref{eqh1}
$$\|v_0\|_{L^2(\mu_n')}\leq C n^{-1/2}|\zeta|^{1-k}\|\dbar f\|_{L^2(\mu_n)}.$$
Since $n\phi_n \leq nV_n + \const$ we infer that $\|v_0\|_{L^2(\mu_n)}\leq C\|v_0\|_{L^2(\mu_n')}$.
\end{proof}

\begin{rem} If $Q_0$ is homogeneous of degree $2\lambda>0$, then $Q$ is locally of Sobolev class $W^{2,p}$ when $p>1$ is close enough to $1$.
In this case the obstacle function $\check{Q}$ is not $C^{1,1}$-smooth, but merely of local class $W^{2,p}$ near $0$, see \cite[Section 3]{HM}.
However, this is enough to make our above argument work, so the result of Lemma \ref{lem:he} holds in this case as well.
\end{rem}

\begin{lem} \label{bp1} For all $\zeta$ in the annulus \eqref{annular}
we have the estimate
$$|\,\pi_n[\chi_\zeta\bfL_\zeta^\sharp](\zeta)-n\Lap Q(\zeta)\,e^{\,nV_n(\zeta)}\,|\le N_n(\zeta)\, e^{\,nV_n(\zeta)} $$
where, for some constants $C,d>0$,
\begin{equation}\label{nndef}N_n(\zeta)=C \sqrt{n}|\,\zeta\,|^{\,k-2}e^{-{d} n|\zeta|^{2k}} .\end{equation}
\end{lem}

\begin{proof} Let
$u=\chi_\zeta \bfL_\zeta^\sharp-\pi_n[\chi_\zeta \bfL_\zeta^\sharp]$
be the norm-minimal solution in $L^2(\mu_n)$ to the problem $\dbar u=\dbar f$ where $f=\chi_\zeta \bfL_\zeta^\sharp$.
By Lemma \ref{lem:he},
\begin{equation}\label{wilp}\|\,u\,\|_{L^2(\mu_n)}\le Cn^{-1/2}|\zeta|^{1-k}\|\,\dbar[\chi_\zeta \bfL_\zeta^\sharp]\,\|_{L^2(\mu_n)}.\end{equation}
By Lemma \ref{mend} we have
when $\omega\in D(\zeta,\rho)$
$$e^{\re nV_n(\zeta,\omega)-nV_n(\omega)/2}\le Ce^{nV_n(\zeta)/2\,-\num n|\zeta|^{2k-2}\,\babs{\,\omega-\zeta\,}^{\,2}/2}.$$
This gives
\begin{align*}|\,\dbar u(\omega)\,|^{\,2}e^{-nV_n(\omega)}&=|\dbar\chi_\zeta(\omega)|^2 n^2  |\d_\zeta\dbar_\omega V_n(\zeta,\omega)|^2 { e^{n(2\re V_n(\zeta,\omega)-V_n(\omega))}}\\
&\le C(n\Lap Q_0(\zeta))^{\,2}|\,\dbar\chi_\zeta(\omega)\,|^{\,2}e^{nV_n(\zeta)-\num (m/2)^2n|\zeta|^{2k}}.
\end{align*}
By the homogeneity of $\Lap Q_0$ we obtain the estimate
$$\|\,\dbar (\chi_\zeta \bfL_\zeta^\sharp)\,\|_{L^2(\mu_n)}\le Cn\babs{\,\zeta\,}^{\,{2k-2}}e^{-\para n|\zeta|^{2k}}e^{nV_n(\zeta)/2},$$
with a suitable $\para>0$.
Applying \eqref{wilp}, we now get
\begin{equation}\label{boo}\|\,u\,\|_{L^2(\mu_n)}\le C\sqrt{n}|\,\zeta\,|^{\,k-1}e^{-\para n|\zeta|^{2k}}e^{nV_n(\zeta)/2}.\end{equation}

In the notation of Lemma \ref{pointl2} we now put $\delta=\tilde{\rho}=\sigma|\zeta|$. By choosing $\sigma$ small enough, we insure that
the constant $\tau\propto \sigma^{2}$ satisfies $\tau<\nu$.
We also take $g=\bfL_\zeta^\sharp$ and $P=\chi_\zeta$. By Lemma \ref{pointl2} and the estimate in \eqref{boo}, we infer that
$$\babs{u(\zeta)}e^{-nV_n(\zeta)/2}\le
C\sqrt{n}|\zeta|^{k-1}e^{nV_n(\zeta)/2}\cdot |\zeta|^{-1}e^{-dn|\zeta|^{2k}},$$
where $d=\nu-\tau$ is positive.
\end{proof}

We now finish our argument for Theorem \ref{MT2}. Fix $\eps>0$ and take
$\zeta$ in the annular region \eqref{annular}.
By \eqref{hk1:eq} and Lemma \ref{bp1} we have that
\begin{align*}\babs{\,\bfR_n(\zeta)-n\Lap Q_0(\zeta)\,}&\le M_n(\zeta)
\sqrt{\bfR_n(\zeta)}+N_n(\zeta),
\end{align*}
where the functions $M_n$ and $N_n$ are defined in \eqref{mndef} and \eqref{nndef}, respectively.

Multiplying through by $r_n^{\,2}$ and writing $R_n(z)=r_n^{\,2}\,\bfR_n(\zeta)$, $z=r_n^{-1}\zeta$, we get
\begin{equation*}\babs{\,R_n(z)-\Lap Q_0(z)\,}\le r_nM_n(r_nz)\sqrt{R_n(z)}+r_n^2N_n(r_nz).\end{equation*}

A calculation shows that
\begin{equation*}\begin{cases}r_nM_n(r_nz)&=C(r_n^2|z|^{1-k}+|z|^{-1}e^{-\para|z|^{2k}}),\cr
&\cr
r_n^2N_n(r_nz)&=C|z|^{k-2}e^{-d|z|^{2k}}.\cr
\end{cases}\end{equation*}
Letting $R=\lim R_{n_k}$, we get for $|z|\ge 1$
{(assuming that $d\le \para$)
\begin{equation}\label{jimp}|R(z)-\Lap Q_0(z)|\le (C_1|z|^{-1}\sqrt{R(z)}+C_2|z|^{k-2})e^{-d|z|^{2k}}.\end{equation}
Let $M$ be a large constant, and assume that
\begin{equation}\label{bibim}|R(z)-\Lap Q_0(z)|\ge M\Lap Q_0(z)e^{-d|z|^{2k}/2},\quad (|z|\ge C),
\end{equation} where $C$ is large. Then
\eqref{jimp} gives
\begin{equation}\label{con1}R(z)\ge M'e^{d|z|^{2k}}\end{equation}
where $M'=M'(C,M)$ is a new constant. However, by the estimate
of Theorem \ref{lem:normal} below, we have the bound
\begin{equation}\label{con2}R(z)\le B|z|^{\max(4k-2,0)},\qquad (|z|\ge 1).\end{equation}

The estimates \eqref{con1} and \eqref{con2} contradict each other, so the assumption \eqref{bibim}
must be false when $M$ is large enough. We have shown that
$$R(z)=\Lap Q_0(z)(1+O(e^{-\alpha|z|^{2k}})),\quad (|z|\to\infty),$$
where $\alpha=d/2$.} Hence Theorem \ref{MT2} follows once we have proved the estimate \eqref{con2}; the proof of this is carried out in the succeeding section. q.e.d.

\section{Construction of microscopic densities}\label{vogg}

In this section, we prove Theorem \ref{MT1.5} on the existence of micro-densities. In the process, we will supply the
apriori bounds on the 1-point function which are needed to complete our proof of Theorem \ref{MT2}.

Towards this end, let {$Q=Q_0+\re H+Q_{1}$} be the canonical decomposition.
 We consider, as before, the potential $$ V_n=Q-(2c/n)\ell-(1/n) h,\quad (\ell(\zeta)=\log\babs{\zeta}).$$
We assume, as always, that $Q(0)=h(0)=H(0)=0$ and that $nr_n^{2k}=1$.
Recall that $d\mu_n=e^{-nV_n}\, dA$.

\subsection{Limiting kernels}

We introduce the "rescaled potential" $\tilde{V}_n$ by
$$\tilde{V}_n(z)=nQ(r_n z){-h(r_nz)}-2c\log |z|,$$
and accordingly we define the measure $d\tilde{\mu}_n=e^{-\tilde{V}_n}dA$.

Observe that the reproducing kernel $k_n$ for the subspace $\tilde{\calP}_n=\Poly(n)\subset L^2_a(\tilde{\mu}_n)$ is given by
\begin{equation}\label{tilp}k_n(z,w)=r_n^{2+2c}\bfk_n(\zeta,\eta),\quad  \zeta=r_nz,\, \eta=r_nw,\end{equation}
where $\bfk_n$ is the reproducing kernel for $\calP_n$ (see Section \ref{mde}).

We are heading for a normal families argument, based on estimates for the function
$k_n(z,z)$. For this purpose, we now prove two lemmas.

\begin{lem}\label{lem:est} Assume that $-1< c \leq 0$ and $k>0$ (not necessarily an integer).
Then for each $T\ge 1$ there is a constant $C=C(T)$ such that for all $u\in L^2_a(\tilde{\mu}_n)$ we have
 \begin{equation}\label{fe1}
\babs{u(z)} e^{-nQ(r_n z)/2}\leq C \norm{u}_{L^2(\tilde{\mu}_n)},\quad |z|\le T.
\end{equation}
Moreover, there is a constant $B$ independent of $T$ such that
\begin{equation}\label{fe2}C(T)\le BT^{\max(2k-1,0)-c}.\end{equation}
\end{lem}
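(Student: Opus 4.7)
My approach is a weighted sub-mean value estimate. Fix $w_0$ with $|w_0|\le T$ and a radius $\delta>0$ to be chosen, and define
\[
G(w)=|u(w)|^2\,\exp\bigl(-nQ_n(r_n w)+A|w|^2\bigr).
\]
Because $nQ_n(r_n w)=Q_0(w)+O(r_n)$ uniformly on compacts and $\Lap Q_0$ is homogeneous of degree $2k-2$, one has $\Lap[nQ_n(r_n\cdot)](w)\le C\max(1,|w_0|^{2k-2})$ on $D(w_0,\delta)\subset\{|w|\le T+1\}$ for large $n$. Choosing $A$ to dominate this bound makes the weight $-nQ_n(r_n w)+A|w|^2$ superharmonic on the disk, hence $G$ is logarithmically subharmonic there, and the sub-mean value inequality gives
\[
|u(w_0)|^2 e^{-nQ_n(r_n w_0)}\le \delta^{-2}e^{A(2|w_0|\delta+\delta^2)}\int_{D(w_0,\delta)}|u(w)|^2 e^{-nQ_n(r_n w)}\,dA(w).
\]

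Next I would convert the right-hand integral to the weighted norm. Since $\tilde V_n(w)=nQ_n(r_n w)+2c\log|r_n w|$,
\[
\|u\|_{L^2(\tilde\mu_n)}^2=r_n^{-2c}\int_\C|u(w)|^2 e^{-nQ_n(r_n w)}|w|^{-2c}\,dA(w),
\]
and on $D(w_0,\delta)$ the hypothesis $c\ge 0$ yields $|w|^{-2c}\ge(|w_0|+\delta)^{-2c}$, so
\[
\int_{D(w_0,\delta)}|u(w)|^2 e^{-nQ_n(r_n w)}\,dA(w)\le(|w_0|+\delta)^{2c}r_n^{2c}\|u\|_{L^2(\tilde\mu_n)}^2.
\]

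The crux is to choose $\delta$ so that $e^{A(2|w_0|\delta+\delta^2)}$ stays bounded while $\delta^{-2}$ is as small as possible. For $|w_0|\le 1$ I would take $\delta$ and $A$ to be absolute constants, giving a bound independent of $T$. For $1\le|w_0|\le T$ I would take $A\asymp|w_0|^{2k-2}$ and $\delta\asymp|w_0|^{1-2k}$; then $A|w_0|\delta$ and $A\delta^2$ are both $O(1)$, $\delta^{-2}\asymp|w_0|^{4k-2}$, and $(|w_0|+\delta)^{2c}\asymp|w_0|^{2c}$. Assembling these gives $|u(w_0)|^2 e^{-nQ_n(r_n w_0)}\le C|w_0|^{4k+2c-2}r_n^{2c}\|u\|_{L^2(\tilde\mu_n)}^2$, and taking the supremum over $|w_0|\le T$ yields $C(T)\le BT^{4k+2c-2}$ with $B$ absolute.

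The main obstacle is precisely this balancing in $\delta$: the radius must shrink with $|w_0|$ fast enough to offset the growth $\Lap Q_0(w)\sim|w|^{2k-2}$ inside the Gaussian correction $e^{A|w|^2}$, yet slowly enough that $\delta^{-2}$ stays under $T^{4k-2}$. The balance $\delta\asymp|w_0|^{1-2k}$ is forced by the two constraints $A|w_0|\delta,A\delta^2=O(1)$, and it is this balance that produces the polynomial factor $T^{4k-2}$; the extra $T^{2c}$ comes from converting back to the weighted norm under the standing hypothesis $c\ge 0$.
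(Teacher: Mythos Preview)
Your argument is correct and follows essentially the same route as the paper: apply the sub-mean value inequality to $|u|^2$ multiplied by a Gaussian correction $e^{A|w|^2}$ with $A$ dominating $\Lap Q_0$, and balance $A\asymp|w_0|^{2k-2}$ against $\delta\asymp|w_0|^{1-2k}$ to keep $A|w_0|\delta=O(1)$ while $\delta^{-2}\asymp|w_0|^{4k-2}$. The only difference is organizational: by building the subharmonic function from the smooth weight $nQ_n(r_n\cdot)$ rather than from $\tilde V_n$ (which carries the logarithmic singularity at $0$), and by choosing $A,\delta$ locally in $|w_0|$ rather than globally in $T$, you avoid the paper's case split between $|z|\ge\delta$ and $|z|<\delta$.
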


\begin{proof} Write $D_r=\{|z|\le r\}$ and $D_r^*=D_r\setminus\{0\}$ and fix a number $\delta$ with $0<\delta<1/2$.
We also fix a constant $\alpha$ such that
\begin{equation}\label{ale}\alpha > \max_{|z|=1}\frac {\Lap Q_0(z)}{k^2}\end{equation}

Now consider the function
$$F_n(z)=\babs{u(z)}^2 e^{-\tilde{V}_n(z)+\alpha\babs{z}^{2k}}.$$
For $z\in D^*_{T+\delta}$ we have
$$\Lap\tilde{V}_n(z) = \Lap Q_0(z) + O(r_n),\quad (n\to\infty)$$
so for $n$ large enough we will have
$$\alpha k^2|z|^{2k-2}-\Lap\tilde{V}_n(z)=k^2|z|^{2k-2}\left[\alpha -\frac {\Lap Q_0(z/|z|)}{k^2}\right]+O(r_n)$$
which is strictly positive by \eqref{ale}.
It follows that $F_n$ is logarithmically subharmonic on $D^*_{T+\delta}$. We will use this
to estimate the left hand side of \eqref{fe1} for a fixed $z\in D_T$.

First we fix a real number $k$ with $k>1/2$ and assume that $\delta\le |z|\le T$. Since $F_n$ is subharmonic,
\begin{align*}
\babs{u(z)}^2 e^{-\tilde{V}_n(z)+\alpha\babs{z}^{2k}} \leq 4\, \delta^{-2}\int_{D(z,
\delta/2)} \babs{u(w)}^2 e^{-\tilde{V}_n(w)+\alpha\babs{w}^{2k}}dA(w).
\end{align*}
By Taylor's theorem, we have when $|w-z|\le \delta /2$
$$|w|^{2k}-|z|^{2k}\le 2k|z|^{2k-1}\frac \delta 2+k(2k-1)x_*^{2k-2}\left(\frac\delta 2\right)^2$$
for some real number $x_*\in (|z|,|z|+\delta/2)$. If $1/2<k<1$ then the error term has a bound
$$k(2k-1)x_*^{2k-2}(\delta/2)^2\le C\delta^{2k}.$$
If $k\ge 1$ then the error term is bounded by
$$k(2k-1)x_*^{2k-2}(\delta/2)^2\le C\delta^2T^{2k-2}.$$

Now choose $\delta=T^{1-2k}$. Then for $|w-z|\le\delta/2$,
$$e^{\alpha(|w|^{2k}-|z|^{2k})}\le e^{\alpha kT^{2k-1}\delta+C\max\{\delta^{2k},\delta^2T^{2k-2}\}}\le e^{\alpha k+C\max\{T^{2k(1-2k)},T^{-2k}\}}.$$
Hence
$$|u(z)|^2e^{-nQ(r_n z)}\le C_1\norm{u}^2_{L^2(\tilde{\mu}_n)}$$
where $C_1$ is at most
$$C_1\le 4T^{4k-2}e^{\alpha k+C\max\{T^{2k(1-2k)},T^{-2k}\}-2c\log T}:=B^2T^{4k-2c-2}.$$

Now suppose $|z|<\delta$ where $\delta=T^{1-2k}$. In this case we have for $|w-z|\le \delta$
$$|w|^{2k}-|z|^{2k}\le (2\delta)^{2k}$$
so for $|z|< \delta$
\begin{align*}
|u(z)|^2e^{-nQ(r_nz)} &\le \delta^{-2} e^{-\alpha|z|^{2k}}\int_{D(z,\delta)}\babs{{u}(w)}^2 e^{-nQ(r_n w)+\alpha\babs{w}^{2k}}\,dA(w) \\
&\le C_2\|u\|_{L^2(\tilde{\mu}_n)}^2,
\end{align*}
where
$$C_2 \leq C\delta^{-2}e^{\alpha(2\delta)^{2k}}\le C T^{4k-2}e^{\alpha 4^kT^{2k(1-2k)}}\le B^2T^{4k-2c-2}.$$

On the other hand, the above argument can be adapted for the case of $0<k\leq 1/2$. The settings are the same, but we take $\delta$ to be independent of $T$. More precisely, for $z,w$ with $\delta \leq |z| \leq T$ and $|w-z|\leq \delta/2$, we have
$$|w|^{2k} - |z|^{2k} \leq k\,\delta^{2k}$$
by Taylor's theorem. This implies that
$$|u(z)|^2 e^{-nQ(r_n z)} \leq C_1 \|u\|^{2}_{L^2(\tilde\mu_n)}, \quad \delta \leq |z|\leq T$$
where $C_1 \leq 4\delta^{-2} e^{\alpha k \delta^{2k}-2c\log T} \leq B^2 T^{-2c}$ for some constant $B$.
For $z,w$ with $|z|<\delta$ and $|w-z|\leq \delta$, we have
$$|w|^{2k} - |z|^{2k} \leq (2\delta)^{2k},$$ which gives
$$|u(z)|^2 e^{-nQ(r_n z)} \leq C_2 \|u\|^{2}_{L^2(\tilde\mu_n)}, \quad |z|< \delta$$ where $C_2 \leq C \delta^{-2} e^{\alpha (2\delta)^{2k}} \leq B^{2} T^{-2c}$.
\end{proof}

We now consider an arbitrary $c>-1$ and write
\begin{equation}\label{cqdec}c=q-c'\end{equation} where $q$ is a non-negative integer and $0\le c' <1$.

\begin{lem} \label{lem:q} For all $u\in L^2_a(\tilde{\mu}_n)$ and all $T\ge 1$ we have the estimate
\begin{equation}\babs{u(z)}e^{-nQ(r_nz)/2}\le BT^{\max(2k-1,0)+c'}|z|^{-q}\|u\|_{L^2(\tilde{\mu}_n)},\quad (|z|\le T).
\end{equation}
\end{lem}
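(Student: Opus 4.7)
The plan is to reduce to Lemma \ref{lem:est} by multiplying $u$ by a suitable monomial that shifts the Sobolev parameter from $c=-q+c'$ into the admissible range $[0,1)$. Specifically, given $u\in L^2_a(\tilde{\mu}_n)$, I would set
$$v(z) := z^q u(z),$$
which is again entire, and compare its norm (with parameter $c'$) to $\|u\|_{L^2(\tilde{\mu}_n)}$.

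Concretely, writing out $\tilde{V}_n(z)=nQ_n(r_nz)+2c\log|r_nz|$ and using $c=-q+c'$, we have $e^{-\tilde{V}_n(z)}=e^{-nQ_n(r_nz)}|r_nz|^{2q-2c'}$. A direct change-of-variable calculation
$$\|v\|_{L^2(\tilde{\mu}_n')}^2=\int |z|^{2q}|u(z)|^2 e^{-nQ_n(r_nz)}|r_nz|^{-2c'}\,dA(z)=r_n^{-2q}\|u\|_{L^2(\tilde{\mu}_n)}^2$$
(where $\tilde{\mu}_n'$ is the measure corresponding to the parameter $c'$) gives the precise link between the two norms. I would then apply Lemma \ref{lem:est} to $v$ with parameter $c'\in[0,1)$: for $|z|\le T$,
$$|v(z)|^2 e^{-nQ_n(r_nz)}\le BT^{4k+2c'-2}r_n^{2c'}\|v\|_{L^2(\tilde{\mu}_n')}^2.$$

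Substituting $|v(z)|^2=|z|^{2q}|u(z)|^2$ and $\|v\|_{L^2(\tilde{\mu}_n')}^2=r_n^{-2q}\|u\|_{L^2(\tilde{\mu}_n)}^2$ on both sides, and then dividing by $|z|^{2q}$ (noting that the bound becomes trivially $+\infty$ at $z=0$ when $q\ge 1$), I obtain
$$|u(z)|^2 e^{-nQ_n(r_nz)}\le BT^{4k+2c'-2}r_n^{2c'-2q}|z|^{-2q}\|u\|_{L^2(\tilde{\mu}_n)}^2=BT^{4k+2c'-2}|r_nz|^{-2q}r_n^{2c'}\|u\|_{L^2(\tilde{\mu}_n)}^2,$$
which is exactly the desired estimate. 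The case $q=0$ is just Lemma \ref{lem:est}.

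I don't anticipate any significant obstacle here: the entire content of the argument is the bookkeeping of powers of $r_n$ and of $|z|$ under the substitution $u\mapsto z^q u$. The only mild subtlety is noting that the estimate is vacuous at $z=0$ when $q\ge 1$ (which is consistent with $u$ being allowed to take a nonzero value at the origin even though the weight $|r_nz|^{-2c}$ vanishes to order $2q-2c'>0$ there), so that the division by $|z|^{2q}$ in the final step is legitimate everywhere the estimate is nontrivial.
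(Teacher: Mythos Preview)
Your proposal is correct and is exactly the argument the paper gives: the paper's entire proof is the one line ``Replace `$u(z)$' by `$z^qu(z)$' in Lemma \ref{lem:est},'' and you have simply carried out that substitution with the bookkeeping made explicit. The norm identity $\|z^qu\|_{L^2(\tilde{\mu}_n')}^2=r_n^{-2q}\|u\|_{L^2(\tilde{\mu}_n)}^2$ and the subsequent division by $|z|^{2q}$ are precisely what the paper intends.
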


\begin{proof} Replace "$u(z)$'' by "$z^qu(z)$'' in Lemma \ref{lem:est}.
\end{proof}

Now recall the canonical decomposition $Q=Q_0+\re H+Q_{1}$ and
consider the Hermitian-entire function
\begin{equation}\label{hef}L_n(z,w)=k_n(z,w)e^{-\tilde{H}(z)/2-\overline{\tilde{H}(w)}/2},\quad (\tilde{H}(z)=nH(r_nz)).\end{equation}
Here $k_n$ is the reproducing kernel for the space $\tilde{\calP}_n$, cf. \eqref{tilp}.

We claim that $L_n$ is the reproducing kernel for the Hilbert space
$$\calH_n= \{ f;\, f=p\cdot e^{-\tilde{H}_n/2} \,,\, p\in\Poly(n)\},$$ with the norm of $L^2(\mu_{0,n})$, where
$$d\mu_{0,n}(z)=e^{-Q_0(z)-nQ_{1}(r_n z)+h(r_nz)+2c\log\babs{z}}\,dA(z).$$ Indeed,
for an element $f=p\cdot e^{-\tilde{H}_n/2}\in \calH_n$, we have
\begin{align*}\int f(z) \bar{L}_n(z,w) d\mu_{0,n}(z)&= e^{-\tilde{H}_n(w)/2} \int p(z)\bar{k}_n(z,w) e^{-\tilde{V}_n(z)} \,dA(z)\\
&=f(w).
\end{align*}
We also notice that $\mu_{0,n} \to \mu_0$
in the vague sense of measures, as $n\to\infty$.

In the following, we write
\begin{align}\label{bob}K_n&(z,w)=k_n(z,w)e^{-\tilde{V}_n(z)/2-\tilde{V}_n(w)/2}=L_n(z,w)\\
& \cdot e^{-n(Q(r_nz)+Q(r_nw)-H(r_nz)-\bar{H}(r_nw))/2{+(h(r_nz)+h(r_nw))/2}}\nonumber
|zw|^{c}.\end{align}
Note that the rescaled one-point function $R_n$ is just $R_n(z)=K_n(z,z)$.

\begin{lem}\label{lem:rn} The family $\{R_n\}$ satisfies the following bound, for all $T\ge 1$,
$$R_n(z)\le BT^{\max(4k-2,0)+2c'}|z|^{-2c'},\qquad (|z|\le T),$$
where $B$ is an absolute constant.
\end{lem}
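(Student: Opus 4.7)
The plan is to combine the extremal characterization of the reproducing kernel on the diagonal with the pointwise bound of Lemma \ref{lem:q}. All the genuine analytic work has already been carried out in Lemmas \ref{lem:est}--\ref{lem:q}; the step that remains is essentially bookkeeping of exponents.

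First, I would recall that, as the reproducing kernel of $\tilde{\calP}_n\subset L^2_a(\tilde{\mu}_n)$, $k_n$ satisfies the extremal identity
$$k_n(z,z)=\sup_{0\ne u\in\tilde{\calP}_n}\frac{|u(z)|^2}{\|u\|^2_{L^2(\tilde{\mu}_n)}}.$$
Next, since $V_n(\zeta)=Q_n(\zeta)+(2c/n)\log|\zeta|$, rescaling gives $\tilde{V}_n(z)=nQ_n(r_n z)+2c\log|r_n z|$, so that
$$R_n(z)=k_n(z,z)\,e^{-\tilde{V}_n(z)}=k_n(z,z)\,e^{-nQ_n(r_n z)}\,|r_n z|^{-2c}.$$

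Applying Lemma \ref{lem:q} to each $u$ in the supremum yields, for $|z|\le T$,
$$k_n(z,z)\,e^{-nQ_n(r_n z)}\le BT^{4k+2c'-2}\,|r_n z|^{-2q}\,r_n^{2c'}.$$
Substituting this into the expression for $R_n(z)$ and using the decomposition $c=-q+c'$ (so that $|r_n z|^{-2c}=r_n^{2q-2c'}|z|^{2q-2c'}$), one finds that the powers of $r_n$ cancel exactly, namely $-2q+2c'+(2q-2c')=0$, while the powers of $|z|$ combine as $-2q+(2q-2c')=-2c'$. This produces the claimed bound $R_n(z)\le BT^{4k+2c'-2}|z|^{-2c'}$ on $|z|\le T$.

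There is no substantive obstacle here. The off-origin and near-origin sub-mean-value arguments were performed in Lemma \ref{lem:est} (with the constant $C(T)$ tracked as $BT^{4k+2c-2}$ via the choice $\delta=T^{1-2k}$ and $\alpha\le bT^{2k-2}$), and the reduction from a general $c<1$ to its fractional part $c'\in[0,1)$ was already achieved in Lemma \ref{lem:q} by substituting $u(z)\mapsto z^q u(z)$. The only thing to check in the present lemma is the exact cancellation of the $r_n$-exponents, which is precisely the reason for introducing the integer shift $q$ in the first place.
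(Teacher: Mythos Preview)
Your argument is correct and follows essentially the same route as the paper: the extremal characterization of $k_n(z,z)$ together with Lemma~\ref{lem:q}, followed by bookkeeping of the exponents of $r_n$ and $|z|$. The only cosmetic difference is that the paper first passes through the holomorphic kernel $L_n$ (equation~\eqref{hef}) and then uses the relation~\eqref{bob} to return to $R_n=K_n(z,z)$, whereas you go directly via $R_n(z)=k_n(z,z)e^{-\tilde V_n(z)}$; the detour through $L_n$ is there because the estimate~\eqref{jod} on $L_n(z,z)$ is reused in the normal-families Lemma~\ref{lem:no1}, but for the present lemma your direct computation is equivalent.
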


\begin{proof} Since
\begin{equation*}k_n(z,z)=\sup \{\babs{p(z)}^2\, ;\, p\in \tilde{\calP}_n,\,\norm{p}_{L^2(\tilde{\mu}_n)}\leq 1 \},\end{equation*} it follows from Lemma \ref{lem:q} that for all $n$ and all $|z|\le T$,
\begin{equation*}k_n(z,z) \leq BT^{\max(4k-2,0)+2c'}|z|^{-2q}
e^{nQ(r_n z)}.\end{equation*}
Hence
\begin{equation}\label{jod}\begin{split}L_n(z,z)&=k_n(z,z)e^{-\re \tilde{H}(z)}\\
&\le BT^{\max(4k-2,0)+2c'}|z|^{-2q}e^{Q_0(z)+nQ_{1}(r_n z)},\quad (|z|\le T),\\
\end{split}\end{equation}
The lemma follows by applying this estimate for $L_n(z,z)$ in the right hand side of the expression \eqref{bob}.
\end{proof}

 To interpret this result, we introduce a few basic isomorphisms.

Consider $c$ and $c'$ related by \eqref{cqdec} and write
$$V_0(z)=Q_0(z)-2c\log |z|,\quad V_0'(z)=Q_0(z)+2c'\log|z|.$$
Denote $d\mu_0=e^{-V_0}\, dA$, $d\mu_0'=e^{-V_0'}\, dA$. Let us also put $\calH_0=L^2_a(\mu_0)$ and
$\calH_0'=\{z^qu(z);\, u\in L^2_a(\mu_0)\}\subset L^2_a(\mu_0')$. Then the map
$$\calH_0\to\calH_0'\quad ,\quad u(z)\mapsto z^qu(z)$$
is an isometric isomorphism. Hence if we denote $L_0$ and $L_0'$ the Bergman kernels of $\calH_0$ and $\calH_0'$ respectively,
then
$$L_0'(z,w)=(z\bar{w})^q L_0(z,w).$$

Using the canonical decomposition $Q=Q_0+\re H+Q_1$, we similarly define the measures $\mu_{0,n}$ and $\mu_{0,n}'$ by
\begin{equation*}\begin{cases}
d\mu_{0,n}(z)&=e^{-V_0(z)-nQ_1(r_nz){+h(r_nz)}}\, dA(z),\cr
&\cr
d\mu_{0,n}'(z)&=e^{-V_0'(z)-nQ_1(r_nz)
{+h(r_nz)}}\, dA(z).\cr
\end{cases}
\end{equation*}
We write
$$\calH_n=\{f;\, f(z)=p(z)\cdot e^{-nH(r_nz)/2};\, p\in\Poly(n)\}$$
with the norm of $L^2(\mu_{0,n})$ and
$$\calH_n'=\{f;\, f(z)=z^q\cdot p(z)\cdot e^{-nH(r_nz)/2};\, p\in\Poly(n)\}$$
with the norm of $L^2(\mu_{0,n}')$. Obviously, the map
$\calH_n\to\calH_n'$, $u(z)\mapsto z^q u(z)$ is an isometric isomorphism. Hence if we denote by
$L_n$ and $L_n'$ the respective reproducing kernels, we have
$$L_n'(z,w)=(z\bar{w})^qL_n(z,w).$$

\begin{lem}\label{lem:no1} The Hermitian-entire functions $\{L_n'\}$ form a normal family.
\end{lem}

\begin{proof} The estimate \eqref{jod} shows that the family $L_n'(z,z)$ is locally uniformly bounded on $\C$. The lemma follows, since $|L_n'(z,w)|^2 \le L_n'(z,z) L_n'(w,w)$.
\end{proof}

We now come to our main result in this section, which, in particular, implies Theorem \ref{MT1.5}. Before stating the theorem, we require a definition. A Hermitian function $c(z,w)$ is called a \textit{cocycle} if there is a unimodular function $g$ such that $c(z,w)=g(z)\overline{g(w)}$.

\begin{thm}\label{lem:normal}
There exists a sequence of cocycles $c_n$ such that each subsequence of $\{|z\bar{w}|^{c'}(c_n K_n)(z,w)\}_n$ has a further subsequence converging uniformly on compact subsets of $\C^2$ as $n \to\infty$. Moreover, each subsequential limit $K=\lim (c_{n_l} K_{n_l})$ is of the form $K(z,w)=L(z,w)e^{-V_0(z)/2-V_0(w)/2}$ where $L'(z,w)=(z\bar{w})^qL(z,w)$ is Hermitian-entire. If $R(z)=K(z,z)$ then the convergence $R_{n_l}\to R$ holds in $L^1_{\loc}(\C)$,
and for $T\ge 1$,
\begin{equation}\label{mupp}R(z)\le BT^{\max(4k-2,0)+2c'}|z|^{-2c'},\qquad (|z|\le T).\end{equation}
\end{thm}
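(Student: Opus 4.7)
I would prove this in three stages.

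\emph{Subsequence extraction.} By Lemma~\ref{lem:no1} the sequence $\{\ell_n\}$, $\ell_n(z,w) = (z\bar{w})^q L_n(z,w)$, is a normal family of Hermitian-entire functions on $\C^2$. Given any initial subsequence, I can therefore extract a further subsequence $\{n_k\}$ along which $\ell_{n_k} \to \ell$ locally uniformly on $\C^2$, with $\ell$ Hermitian-entire.

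\emph{Cocycle removal and identification of the limit.} Using $c = -q + c'$ and $|z\bar{w}| = |zw|$, formula \eqref{bob} rewrites as
\begin{equation*}
|z\bar w|^{c'} K_n(z,w) = L_n(z,w)\,|zw|^q\, e^{-(Q_0(z)+Q_0(w))/2 - (nQ_{1,n}(r_nz)+nQ_{1,n}(r_nw))/2}.
\end{equation*}
The only obstruction to recognizing $\ell_n = L_n(z\bar w)^q$ on the right-hand side is the unimodular ``angular twist'' $\kappa(z,w) := |zw|^q/(z\bar{w})^q$, which is a cocycle: with $g(z)=(\bar z/|z|)^q$ one checks $\kappa(z,w) = g(z)\overline{g(w)}$. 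Setting $c_n(z,w) := \kappa(z,w)^{-1} = \overline{g(z)}g(w)$ (in fact independent of $n$), one obtains
\begin{equation*}
|z\bar w|^{c'}(c_nK_n)(z,w) = \ell_n(z,w)\, e^{-(Q_0(z)+Q_0(w))/2 - (nQ_{1,n}(r_nz)+nQ_{1,n}(r_nw))/2}.
\end{equation*}
Now $Q_{1,n} = Q_1 + (2c/n)h_2$ with both $Q_1$ and $h_2$ equal to $O(|\zeta|^{2k+1})$ at $0$, and $nr_n^{2k}=1$, so $nQ_{1,n}(r_nz) = O(r_n|z|^{2k+1}) \to 0$ locally uniformly in $z$. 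Combined with $\ell_{n_k}\to\ell$, the right-hand side converges uniformly on compacts of $\C^2$ to $\ell(z,w)e^{-(Q_0(z)+Q_0(w))/2}$, which proves the first assertion. Off the coordinate axes, dividing by $|z\bar w|^{c'}$ yields $c_{n_k}K_{n_k} \to K$ with $K(z,w) = L(z,w)\,e^{-V_0(z)/2-V_0(w)/2}$ and $L = \ell/(z\bar{w})^q$, so that $(z\bar{w})^qL = \ell$ is Hermitian-entire as required.

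\emph{Distributional convergence and pointwise bound.} Cocycles are trivial on the diagonal, $c_n(z,z)=|g(z)|^2=1$, so $R_n(z) = K_n(z,z)$ is cocycle-invariant and hence $R_{n_k}(z)\to R(z)=K(z,z)$ locally uniformly on $\dot{\C}$. The uniform majorant from Lemma~\ref{lem:rn}, $R_n(z)\le BT^{4k+2c'-2}|z|^{-2c'}$ for $|z|\le T$, is locally integrable in $\C$ since $c'<1$, so dominated convergence upgrades this to $R_{n_k}\to R$ in the sense of distributions on $\C$. The estimate \eqref{mupp} is then inherited by passing to the limit in the pointwise bound.

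\emph{Main obstacle.} The crux of the argument is the cocycle construction in the second stage: one has to isolate the unimodular angular twist separating $|zw|^q$ from $(z\bar{w})^q$, which is the only thing preventing direct convergence of $K_n$ and which precisely matches the structure carried by $\ell_n$. Once this cocycle is stripped off, the remainder is routine --- Montel for the entire part, together with the harmless vanishing of $nQ_{1,n}(r_n z)$ in the exponential factor, plus a standard dominated-convergence step on the diagonal.
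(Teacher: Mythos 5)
There is a genuine gap, and it sits exactly where the theorem's content lies: in which cocycles $c_n$ are needed. You took the displayed identity \eqref{bob} at face value, but \eqref{bob} only holds up to a unimodular factor. Indeed $K_n$ is built from $e^{-\tilde V_n(z)/2-\tilde V_n(w)/2}$, whose exponent contains the \emph{real part} $\re\tilde H_n$, whereas $L_n$ in \eqref{hef} strips off the full analytic factor $e^{-\tilde H_n(z)/2}$ (and its conjugate in $w$). Comparing the two, one finds
\begin{equation*}
K_n(z,w)=L_n(z,w)\,e^{\,i\im(\tilde H_n(z)-\tilde H_n(w))/2}\,
e^{-(Q_0(z)+nQ_{1,n}(r_nz)+Q_0(w)+nQ_{1,n}(r_nw))/2}\,|zw|^{-c},
\end{equation*}
and the phase $e^{\,i\im(\tilde H_n(z)-\tilde H_n(w))/2}$ is precisely what the cocycles in the theorem must remove. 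Since $\tilde H_n(z)=nH_n(r_nz)$ contains, e.g., the linear term $2n\d Q(0)r_n z$ with $nr_n\to\infty$, this phase oscillates unboundedly in $z-w$ as $n\to\infty$; consequently $|z\bar w|^{c'}K_n$ multiplied only by your $n$-independent angular cocycle $\overline{g(z)}g(w)$, $g(z)=(\bar z/|z|)^q$, has in general \emph{no} locally uniformly convergent subsequence. This is why the paper introduces $E_n(z,w)=e^{\,n(H_n(\zeta)+\bar H_n(\eta)-Q_n(\zeta)-Q_n(\eta))/2}$ and the $n$-dependent cocycles $c_n(z,w)=e^{\,i\im(\tilde H_n(z)-\tilde H_n(w))/2}$, for which $E_n=c_n\,e^{-Q_0(z)/2-Q_0(w)/2}(1+o(1))$ locally uniformly; only after this is stripped off does the normal-family limit of $\ell_n=(z\bar w)^qL_n$ give the stated convergence. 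Your angular twist, by contrast, is not actually needed: in the paper's limit $|z\bar w|^{q}L\,e^{-Q_0(z)/2-Q_0(w)/2}$ the discontinuous factor $|z\bar w|^q/(z\bar w)^q$ is harmless because $\ell$ vanishes on the coordinate axes when $q\ge1$.

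Your choice of cocycle would be adequate only when $\im\tilde H_n$ drops out, e.g.\ for the trivial canonical decomposition $Q_n=Q_0$ used in Section \ref{BFM}, but the theorem is stated (and used) for the general potential with nontrivial $H_n$. The remaining steps of your plan — extraction via Lemma \ref{lem:no1}, the observation that $nQ_{1,n}(r_nz)=O(r_n|z|^{2k+1})\to0$ locally uniformly, cocycle-invariance on the diagonal, the passage to distributional convergence of $R_{n_k}$ via the locally integrable majorant from Lemma \ref{lem:rn}, and inheriting \eqref{mupp} in the limit — all agree with the paper's argument and are fine. To repair the proof, replace your cocycle by $c_n$ as above (or its conjugate) and insert the uniform convergence $c_n^{-1}E_n\to e^{-Q_0(z)/2-Q_0(w)/2}$ on compacts before invoking $\ell_{n_k}\to\ell$.
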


\begin{proof}
Define a function $E_n(z,w)$ by
$$E_n(z,w)=e^{n(H(\zeta)+\bar{H}(\eta)-Q(\zeta)-Q(\eta))/2{+(h(\zeta)+h(\eta))/2}},\quad (\zeta=r_nz,\quad \eta=r_nw).$$
Then $K_n(z,w)=(L_n E_n)(z,w) |zw|^{c}$, see \eqref{bob}.
Denoting by $c_n$ the cocycle  $$c_n(z,w)=e^{i\im(\tilde{H}(z)-\tilde{H}(w))/2},$$
we have the locally uniform asymptotic relation
$$E_n(z,w)=c_n(z,w)\,e^{-Q_0(z)/2-Q_0(w)/2}(1+o(1)),\quad n\to\infty.$$

By Lemma \ref{lem:no1}, the functions $L_n'(z,w)=(z\bar{w})^qL_n(z,w)$ form a normal
family. Hence each subsequence has a further subsequence (renamed  $L_n'$) which
converges locally uniformly to a Hermitian-entire function $L'$. We put $L(z,w)=(z\bar{w})^{-q}L'(z,w)$.

We have shown that there are cocycles $c_n$ such that
\begin{align*}|z\bar{w}|^{c'}(c_{n}K_{n})(z,w)&=|z\bar{w}|^q (c_nL_{n}E_{n})(z,w)\\
&\to |z\bar{w}|^q L(z,w)\,e^{-Q_0(z)/2-Q_0(w)/2}
\end{align*}
locally uniformly as $n\to\infty$. Write $$K(z,w)=L(z,w) \, e^{-V_0(z)/2-V_0(w)/2}.$$

Let $R_n(z)=K_n(z,z)$ and $R(z)=K(z,z)$, and note that by Lemma \ref{lem:rn}, the family $\{R_n\}$ is locally uniformly integrable. This shows that the convergence $R_n\to R$ holds in $L^1_{\loc}(\C)$.
Finally, the estimate \eqref{mupp} is clear from Lemma \ref{lem:rn}.
\end{proof}

\subsection{Positivity} Theorem \ref{lem:normal} has the following consequence.

\begin{thm} \label{post} Let $L_0'$ be the Bergman kernel of $L^2_a(\mu_0')$ (corresponding to parameter value $c=c'$)
and let $L'=(z\bar{w})^qL$ be a limiting kernel in Theorem \ref{lem:normal}. Then $L_0'-L'$ is a positive matrix:
$\sum_{j,k=1}^N\alpha_j\bar{\alpha}_k(L'_0-L')(z_j,z_k)\ge 0$
for all choices of scalars $\alpha_j\in \C$ and all points $z_j\in \C$.
\end{thm}

\begin{proof} It follows from Theorem \ref{lem:normal} that $L'=\lim L_n'$ where $L_n'$ is the Bergman kernel of the space
$L^2_a(\mu_{0,n}')$. Since $\mu_{0,n}'\to\mu_0'$ vaguely, and since $L_0'$ is the Bergman kernel of the space $L^2_a(\mu_0')$, we can use Fatou's lemma as in \cite{AKM} or \cite{AS} to conclude that $L'$ is the Bergman kernel of
some semi-normed Hilbert space $\calH_*'$ of entire functions, which is contractively embedded in $L^2_a(\mu_0')$.
It hence follows as in \cite{AKM,AS}, using Aronszajn's theorem on differences of reproducing kernels in \cite{Ar}, that $L_0'-L'$ is a positive matrix.
\end{proof}

\begin{cor} \label{bodo} Each density $R$ satisfies $R\le R_0$. Moreover there are constants $C_1,C_2>0$ such that $C_1|z|^{2c}\le R_0(z)\le C_2|z|^{2c}$ for all $z$
in a punctured neighbourhood of the origin.
\end{cor}

\begin{proof} That $R\le R_0$ on $\C^*$ follows from Theorem \ref{post}, since $R(z)=L'(z,z)e^{-V_0'(z)}$ and
$R_0(z)=L_0'(z,z)e^{-V_0'(z)}$. It now suffices to note that $R_0(z)=|z|^{2c}M_0(z)$ where the function $M_0(z)=L_0(z,z)e^{-Q_0(z)}$ is continuous.
$M_0$ is also bounded below in a neighbourhood of the origin; indeed $L_0(z,w)=\sum_0^\infty e_j(z)\bar{e}_j(w)$ where $e_j$ are the orthonormal polynomials
with respect to $\mu_0$, so
$$M_0(0)  \geq  |e_0(0)|^2=(\int_\C e^{-V_0}\, dA)^{-1}>0.$$
\end{proof}

\subsection{The Bergman function as a microscopic density} \label{BFM}
Fix a positive definite homogeneous polynomial $Q_0$ and consider the $n$-dependent potential $$V_n(\zeta)=Q_0(\zeta)-\frac {2c}n\log|\zeta|.$$
We note that the origin is an interior point of the droplet corresponding to the potential $Q_0$.
(This follows by using that $Q_0$ is strictly convex and attains its minimum at $0$, see \cite{HM}.)

Consider now the holomorphic kernels $L_n'=(z\bar{w})^qL_n$, cf. \eqref{hef}.
Recall that this kernel reproduces for the space $\calH_n'$ of polynomials $z^q p(z)$ where $p\in\Poly(n)$ equipped with the norm of $L^2(\mu_0')$. Here $$d\mu_0'(w)=e^{-V_0'(w)}\, dA(w)= e^{-Q_0(w)-2c'\log\babs{w}}\,dA(w).$$
By Theorem \ref{lem:normal}, every subsequence of $\{L_n'\}$ has a subsequence which converges locally uniformly to a Hermitian-entire limit $L'$. As the inclusions $\calH_n'\subset\calH_{n+1}'$ are isometric, the limit must be the same for all subsequences.

Now note that for all polynomials $p$
\begin{align*}
z^qp(z)= \lim_{n\to\infty}\int w^qp(w) L_n'(z,w)\, d\mu_0'(w)= \int w^qp(w) L'(z,w)\, d\mu_0'(w).
\end{align*}
Since polynomials are dense in $L_a^2(\mu_0')$, we have for all $f\in L^2_a(\mu_0')$
$$z^qf(z)= \int w^qf(w) L'(z,w)\, d\mu_0'(w).$$
It follows that $L'=L_0'$ on $\C^2$ where $L_0'$ is the Bergman kernel of $\calH_0'.$

We have shown the following result.

\begin{thm} \label{final} In the situation above, the Bergman function $R_0$ of the space $L^2_a(\mu_0)$ equals to the density $R(z)
=L(z,z)e^{-V_0(z)}$.
\end{thm}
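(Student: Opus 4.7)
The plan is to identify the Hermitian-entire limit $L$ produced by Theorem \ref{lem:normal} with the actual Bergman kernel $L_0$ of $L^2_a(\mu_0)$; once this is done, the identity $R=R_0$ comes for free by restricting to the diagonal and multiplying by $e^{-V_0(z)}$.

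First I would unpack what the definition \eqref{hef} becomes in the present simplified setting. Because $Q_0$ is homogeneous of degree $2k$ and the normalization $nr_n^{2k}=1$ holds, homogeneity gives $nQ_0(r_nz)=Q_0(z)$; moreover the canonical decomposition of $Q_n$ at $0$ is trivial, i.e.\ $H_n\equiv 0$ and $Q_{1,n}\equiv 0$. Consequently the rescaled measure satisfies $d\tilde{\mu}_n=r_n^{-2c}\,d\mu_0$ (up to the additive constant $2c\log r_n$ in the exponent), and the kernel $L_n=r_n^{-2c}k_n$ from \eqref{hef} is simply the reproducing kernel of $\calH_n=\Poly(n)$ viewed as a subspace of the fixed ambient space $L^2_a(\mu_0)$.

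Next I would exploit the fact that these subspaces are nested, $\calH_n\subset\calH_{n+1}$, with isometric inclusions, so that the reproducing kernels satisfy the pointwise monotonicity $L_n(z,z)\le L_{n+1}(z,z)$. Combined with the locally-uniform subsequential convergence on $\dot{\C}^2$ provided by Theorem \ref{lem:normal}, this upgrades to convergence of the full sequence and makes the limit $L$ canonical. Then I would verify the reproducing property of $L$: for any fixed polynomial $p$ and any $n>\deg p$ we have $p(z)=\int p(w)\bar{L}_n(z,w)\,d\mu_0(w)$; the Cauchy--Schwarz bound $|L_n(z,w)|^2\le L_n(z,z)L_n(w,w)$, together with the monotonicity and the polynomial-growth estimate of Lemma \ref{lem:rn}, produces an $n$-independent integrable majorant. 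Dominated convergence yields $p(z)=\int p(w)\bar{L}(z,w)\,d\mu_0(w)$. Since polynomials are dense in $L^2_a(\mu_0)$ (which uses $c<1$ to ensure local integrability of $|w|^{-2c}$), the reproducing property extends by continuity of point evaluation to all of $L^2_a(\mu_0)$, so $L=L_0$ on $\dot{\C}^2$. Restricting to the diagonal gives $R=R_0$.

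The main technical obstacle is the passage to the limit inside the reproducing integral, i.e.\ controlling $\int |L_n(z,w)|\,|p(w)|\,e^{-V_0(w)}\,dA(w)$ uniformly in $n$. The pointwise monotonicity $L_n(z,z)\nearrow$ combined with the diagonal bound from Lemma \ref{lem:rn} resolves this cleanly. It is worth emphasizing that this monotonicity is a structural feature specific to the present subsection: it relies on the isometric nesting $\calH_n\subset\calH_{n+1}\subset L^2_a(\mu_0)$, which in turn rests on the triviality of the canonical decomposition in this simplified setting. In the general setting of Theorem \ref{lem:normal} no such nesting is available, which is precisely why uniqueness of the limiting kernel was not asserted there.
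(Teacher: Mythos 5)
Your proposal is correct and follows essentially the same route as the paper: identify $L_n$ (via the trivial canonical decomposition and $nr_n^{2k}=1$) as the reproducing kernel of $\Poly(n)$ inside $L^2_a(\mu_0)$, use the isometric nesting $\calH_n\subset\calH_{n+1}$ to make the limit $L$ unique, pass to the limit in the reproducing identity for polynomials, and conclude $L=L_0$ by density of polynomials. Your explicit dominated-convergence majorant (monotonicity of $L_n(z,z)$ plus the bound of Lemma \ref{lem:rn}) merely fills in a step the paper leaves implicit, so there is no substantive difference.
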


\subsection{Proof of Theorem \ref{extm}} \label{expf} Now consider a potential of the special form $Q=Q_0(\zeta)+\re H(\zeta)+\cdots$ where $Q_0$ is homogeneous of degree $2\lambda$ and the dots represent negligible
terms.
Using the remark at the end of Section \ref{Sec_AP1} and the remark after Lemma \ref{lem:he},
we may repeat our arguments with "$k$" replaced by "$\lambda$".
As a result, our theorems must be true also for generalized potentials of this kind. $\qed$

\end{document}